\documentclass[a4paper,oneside,english,reqno]{amsart}

\usepackage[utf8]{inputenc}
\usepackage[T1]{fontenc}

\DeclareFontFamily{OMX}{mlmex}{}
\DeclareFontShape{OMX}{mlmex}{m}{n}{%
   <->mlmex10%
   }{}%
\usepackage{mlmodern}

\usepackage[hscale=0.6, vscale=0.7]{geometry}
\usepackage{babel}
\usepackage[numbers,sort&compress]{natbib}
\usepackage{hyperref}
\hypersetup{%
  colorlinks=true,%
  citecolor=[RGB]{120,29,126},%
  pdfauthor={Jean-François Burnol},%
  pdfsubject={Zeta values},%
  pdfstartview=FitH,%
  pdfpagemode=UseNone,%
}

\usepackage{ifpdf}
\makeatletter
\ifpdf\else\@ifpackagewith{hyperref}{dvipdfmx}{}{\usepackage{breakurl}}\fi
\makeatother

\usepackage{amsmath,amsthm,amssymb}
\usepackage{mathtools}

\DeclarePairedDelimiterX\Iffint[2]{\lbrack\!\lbrack}{\rbrack\!\rbrack}{#1,#2}
\DeclarePairedDelimiterX\Ioo[2]{\lparen}{\rparen}{#1,#2}
\DeclarePairedDelimiterX\Iof[2]{\lparen}{\rbrack}{#1,#2}
\DeclarePairedDelimiterX\Ifo[2]{\lbrack}{\rparen}{#1,#2}
\DeclarePairedDelimiterX\Iff[2]{\lbrack}{\rbrack}{#1,#2}
\DeclareMathOperator{\Res}{Res}

\newcommand\NN{\mathbb{N}}
\newcommand\ZZ{\mathbb{Z}}
\newcommand\RR{\mathbb{R}}
\newcommand\CC{\mathbb{C}}

\def\e{\mathsf{e}}
\def\dt{\mathrm{d}t}

\def\dx{\mathrm{d}x}

\def\dmu{\mathrm{d}\mskip-1mu\mu}

\theoremstyle{plain}
\newtheorem{theo}{Theorem}[section]
\newtheorem{prop}[theo]{Proposition}
\newtheorem{lem}[theo]{Lemma}
\newtheorem{cor}[theo]{Corollary}

\theoremstyle{definition}

\newtheorem{rema}{Remark}[section]

\allowdisplaybreaks

\title[Dirichlet series with missing digits]{%
  On the analytic continuation of Dirichlet
  series with missing digits}

\author[J.-F. Burnol]{Jean-François Burnol}

\address{Université de Lille,
  Faculté des Sciences et technologies,
  Département de mathématiques,
  Cité Scientifique,
  F-59655 Villeneuve d'Ascq cedex,
  France}
\email{jean-francois.burnol@univ-lille.fr}

\date{February 24, 2026.}

\subjclass[2020]{11M41 (Primary) 11A63, 11B85, 11M06, 11Y35, 33F05, 68R15 (Secondary)}
\keywords{Dirichlet series with missing digits, Riemann zeta function, meromorphic continuation, residues and poles, generating functions, Cantor sets, Bernoulli numbers}

\usepackage[np,autolanguage]{numprint}

\newcommand\arxivurl[1]{\href{https://arxiv.org/abs/#1}{\textsf{arXiv:#1}}}

\usepackage{parskip}
\begin{document}

\begin{abstract}
  We study the Dirichlet series associated with the integers whose radix-$b$
  representation misses certain (fixed) digits. The existence of a meromorphic
  continuation to the entire complex plane, which was already well-known as a
  general fact valid for $b$-automatic Dirichlet series, is proven anew from a
  representation as an everywhere defined series with good convergence
  properties. A generating function related to the residues on the real axis
  is shown to be the multiplicative inverse of the moment generating function
  for the associated Cantor set in the unit interval.  This makes the
  (normalized) residues some sort of generalized Bernoulli numbers.
\end{abstract}

\maketitle


\section{Introduction}

Throughout the paper, $\NN$ is the set of non-negative integers and $\ZZ$ the
set of relative integers.  Let $b>1$ be an integer and $A\subset\Iffint0{b-1}$
be a set of digits in base $b$, containing at least one non-zero digit.  Let
$N$ be the cardinality of $A$.

This paper is devoted to a study of some analytical
properties of the ``restricted'' Dirichlet series:
\begin{equation}\label{eq:Kdef}
  K(s) = \sideset{}{'_{n>0}}\sum n^{-s}.
\end{equation}
Here, the symbol next to the summation sign means that only those integers are
kept whose $b$-ary digits are all in $A$.  We say that such integers are
``admissible'' (and the digits in $A$ are the ``admissible digits''). The
series should arguably be denoted $K_{b,A}(s)$ but we shorten the notation for
convenience.  If $N=b$, i.e.\@ if every digit is admissible, then $K(s)$ is
the defining series of the Riemann zeta function $\zeta(s)$.

This series is a member of the much wider class of \emph{$b$-automatic}
Dirichlet series.  To explain this, rewrite $K(s)$ as $\sum_{n>0} c_nn^{-s}$
where $c_n=1$ if $n$ is admissible, $c_n=0$ otherwise (and we set
$c_0=1$). The sequence $(c_n)_{n\geq0}$ is $b$-automatic (see
\cite{alloucheshallitCUP2003,allouchemendespeyriere2000} for this notion; see
also the recent paper \cite{alloshalstip2025} for a contemporary discussion of
some aspects of Dirichlet series for $b$-automatic and also for the more
general $b$-regular sequences).  As such, thanks to a general theorem of
Allouche, Mendès~France and Peyrière (\cite{allouchemendespeyriere2000}; see
also \cite[Thm.\@ 3.1]{coons2010} for $b$-regular sequences), $K(s)$ has a
meromorphic extension to the entire complex plane, with candidate poles
located in a finite union of left semi-lattices having $1$ and $2\pi i/\log b$
as periods.

The initial motivation for this paper came from another direction.  In
\cite{burnolzeta} we represented $K(s)$ in its half-plane of convergence $\Re
s > \log_b N$ in terms of a geometrically convergent series (even in the case
of the Riemann zeta function, such representation appears to be new),
involving certain coefficients $(u_m(s))_{m\geq0}$ which obey a finite
recurrence which is stable numerically.  They where first introduced for $s=1$
in \cite{burnolkempner}, where a novel algorithm was developed for the
numerical evaluation of $K(1)$ (the Kempner series
\cite{kempner,baillie1979,fischer,burnolkempner}).

In the first section of this paper, we prove that the series of
\cite{burnolzeta} makes sense everywhere in the complex plane and that its
terms (hence its remainders) are bounded by geometrically decreasing
sequences, uniformly on compact sets (see Theorem \ref{thm:main} and Corollary
\ref{cor:main} for the precise statement; a multiplicative factor is present
to cancel the poles).  This applies in particular to $\zeta(s)$, for which we
had proven already the extension to $\Re s>0$ (\cite{burnoleta}).  A new
technique is needed for the extension to the whole plane, and it is presented
here in complete generality of an arbitrary set $A$ of admissible $b$-ary
digits.

The case $N=1$ is very explicit and elementary and we discuss it briefly in
the second section.  We return next to the study with general $N$, and give in
the third section a second proof of meromorphic continuation.  We simply
follow the same steps as in the far more general theorem of
\cite{allouchemendespeyriere2000} regarding $b$-automatic Dirichlet series.
Their method, and our adaptation--instantiation, is based
upon an \emph{infinite functional equation} expressing the value at $s$ in
terms of the values at $s+1$, $s+2$, $s+3$, \dots{} .

Regarding the values at the non-positive integers, it will turn out that
$K(-m)$ is zero for all $m>0$ if $0$ is an admissible digit.  In the
alternative, these values are less trivial and we compute their exponential
generating function (this is done in the last section of this paper,
Proposition \ref{prop:zerovalues}).

Regarding the residues, we prove that, with $s_0 = \log_b N$ the abscissa of
convergence, there is a pole at $s_{m,k}=s_0 - m + k 2\pi i/\log(b)$,
$m\in\NN$, $k\in \ZZ$, if and only if both $s_{m,0}$ and $s_{0,k}$ are indeed
poles: this follows from a proportionality of rows of (modified) residues,
Proposition \ref{prop:mukmu0}. We give a formula for the residue
$\lambda_{0,k}$ at $s_{0,k}$ as a limit (Proposition \ref{prop:res}), but do
not study them further, preferring to focus on the residues $\lambda_{m,0}$ at
the $s_0-m$ locations on the real axis. It proves convenient to replace them
with some closely related quantities $\mu_{m,0}$ (see Equation
\eqref{eq:defmu}).  We can define the latter quantities also in the case of
the Riemann zeta function (for which the residues would be zero), and they
turn out to be the Bernoulli numbers.  Each choice of the integer $b>1$ gives
its own linear recurrence characterizing them.

To conclude this summary, we mention a result which we find rather striking
(from a ``structural'' point of view; we do not engage into any advanced
analytics, though).

Consider the Cantor set of the numbers in $\Iff01$ whose (infinite) $b$-radix
representation (or one of them, if there are two) uses only admissible
digits (if $N=1$ it is reduced to a singleton and if $N=b$ it is the
un-modified interval itself). It is well-known that there is a canonical
probability measure $\mu$ on this set, singular to Lebesgue measure if
$N<b$, whose moment generating function $\int_{\Iff01} \e^{tx}\dmu(x)$ is the
infinite product (which is an entire function):
\begin{equation}\label{eq:cantorcar}
  E(t) = \prod_{j=1}^\infty \frac{\sum_{a\in A} \e^{b^{-j}at}}N\;.
\end{equation}
In the case with $N=b$, a simple computation shows that the above infinite
product, a priori depending on $b$, in fact indeed computes $(\e^t-1)/t =
\int_0^1\e^{tx}\dx$.

Let us define $B(t)$ (for Bernoulli) as the \emph{multiplicative inverse} of
$E(t)$:
\begin{equation}\label{eq:B}
  B(t) = \prod_{j=1}^\infty \frac N{\sum_{a\in A} \e^{b^{-j}at}}\;.
\end{equation}
Theorem \ref{thm:E} states that \emph{$B(t)$ is the exponential generating
  function for the (modified) residues of $K(s)$ at the $s_0-m$ points,
  $m\geq0$.} In the case with $N=b$, this simply means that our ad hoc choice
for the $\mu_{m,0}$ gives the sequence of Bernoulli numbers.  So this makes
the (normalized) residues in general a kind of generalization of Bernoulli
numbers.  Probably some authors have already envisioned \eqref{eq:B} in this
perspective, but we feel that the connection with the residues of the
restricted Dirichlet series contributes to strengthen the analogy.


It would be interesting to check to what extent, some, or all, of our findings
are examples of general facts for $b$-automatic Dirichlet series, or perhaps
even more general ones.  The literature on Dirichlet series, and the one on
automatic sequences as well, are too large for the author to be able to
provide here a curated list of the various works which could prove relevant to
such endeavor.  Thus, in addition to the papers already cited we mention here
only a few, mostly related with the study of the sum-of-digits function or
other summatory functions: \cite{flajoletetal1994, coons2010,
  knilllesieutre2012, everlove2022}.

\section{Representation by geometrically convergent series}

We let $A_1 = A\setminus\{0\}$, we already defined $N=\# A$, and we set
$N_1=\#A_1 \geq1$.  The \emph{length} $\ell(n)$ of a natural integer $n\in\NN$
is the number of digits of the minimal $b$-representation, so $\ell(0)=0$
(as the integer $0$ is represented by the empty word) and $\ell(n)$ for
$n>0$ is the smallest integer exponent $l$ such that $n<b^l$.  Define for each
positive integer $l$ the finite sum (``blocks''):
\begin{equation}
  \label{eq:Kl}
  K_l(s) = \sideset{}{'_{\ell(n)=l}}\sum n^{-s}.
\end{equation}
We also define $K_{\leq \ell}(s) = \sum_{1\leq l \leq \ell} K_l(s)$, and $K_{\geq
  \ell}(s) = \sum_{l=\ell}^\infty K_l(s)$ (in case of convergence, at first),
and similarly with other types of restrictions on the indices.

The cardinality of the set of admissible integers of length $l$ is
$N_1N^{l-1}$.  So, for $s=\sigma$ real, there holds
$N_1N^{l-1}b^{-l\sigma}<K_l(\sigma)\leq N_1 N^{l-1}b^{-(l-1)\sigma}$.  Thus, the
necessary and sufficient condition for the convergence of $K(\sigma) =
\sum_{l=1}^\infty K_l(\sigma)$ is $b^\sigma>N$, i.e.\@ $\sigma>s_0 = \log_b
N$.

A general basic result (Jensen lemma, based upon a partial summation, see
\cite[Chap.\@ 4]{queffqueff2020}), which underlies the notion of abscissa of
convergence, says that as a consequence, the Dirichlet series \eqref{eq:Kdef}
defining $K(s)$ diverges whenever $\Re(s)<s_0$.  The abscissa of convergence
is thus $s_0$ (\cite{kohlspil2009}; see also \cite{nathanson2021jnt} for more
generality, and \cite{alloshalstip2025} for a still wider perspective).
Let us mention this complement:
\begin{prop}
  For any $s = s_0 + it$, $t \in \RR$, the Dirichlet series $K(s)$ diverges.
\end{prop}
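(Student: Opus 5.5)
The plan is to contradict the Cauchy criterion. If the series converged at $s=s_0+it$, then any sum of the terms $n^{-s}$ over a set of admissible integers that forms a \emph{contiguous} run (in increasing order) among the admissible integers, and that lies beyond a large threshold, would have to be small. I will find infinitely many such runs, arbitrarily far out, whose sums stay bounded away from zero in modulus.

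First, the case $N=1$ is immediate. Then $s_0=0$ and every term has modulus $|n^{-s}|=n^{0}=1$. The terms do not tend to zero, so the series diverges. Assume from now on that $N\geq 2$, although the argument below does not really need this.

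The runs I will use are the following. Fix an admissible integer $p>0$; one exists since $A_1\neq\emptyset$, e.g.\@ a repeated nonzero admissible digit, so $p$ can be taken as large as desired. For $l\geq1$, let $R_{p,l}$ be the set of admissible $n$ whose $b$-ary expansion is that of $p$ followed by exactly $l$ further admissible digits. Equivalently, $n=pb^l+m$ with $m$ ranging over the $N^l$ integers $0\leq m<b^l$ whose $l$-digit representation, leading zeros allowed, uses only digits of $A$. These $n$ fill the interval $[pb^l,(p+1)b^l)$. That interval contains no other admissible integers, so $R_{p,l}$ is exactly the set of admissible integers in a half-open interval, hence a contiguous run. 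Its sum therefore equals $S(x_2)-S(x_1)$, where $S(x)=\sum'_{0<n<x}n^{-s}$, $x_1=pb^l$ and $x_2=(p+1)b^l$. Convergence of the series would force this sum to tend to $0$ as $l\to\infty$, for fixed $p$.

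The main step is a lower bound on this block sum. Writing $n=b^l(p+x)$ with $x=m/b^l\in\Iff01$, I get
\begin{equation*}
  \sum_{n\in R_{p,l}} n^{-s} = b^{-ls}\sum_{m}(p+m b^{-l})^{-s}
  = b^{-ils}\,\frac{N^l}{b^{ls_0}}\cdot\frac1{N^l}\sum_m p^{-s}\bigl(1+m b^{-l}/p\bigr)^{-s}.
\end{equation*}
Since $b^{s_0}=N$, the prefactor has modulus exactly $1$. For $0\leq x\leq1$ and $p$ large, $|(1+x/p)^{-s}-1|\leq C|s|/p$ with $C$ absolute. Hence the average over $m$ differs from $p^{-s}$ by at most $C|s|p^{-s_0}/p$ in modulus. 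This gives the uniform bound
\[
  \Bigl|\sum_{n\in R_{p,l}}n^{-s}\Bigr|\geq p^{-s_0}\bigl(1-C|s|/p\bigr),
\]
valid for every $l$. Choosing $p$ once and for all with $p>2C|s|$, the block sums have modulus at least $p^{-s_0}/2>0$ for all $l$. This contradicts the Cauchy criterion, so $K(s)$ diverges.

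The only delicate point is precisely this uniformity in $l$: one cannot use the full length-$l$ blocks $K_l(s)$ directly, because their normalized limit $\sum_{a\in A_1}\int(a+x)^{-s}\dmu(x)$ could conceivably vanish for complex $s$. Passing to a long fixed prefix $p$ makes the function $(p+x)^{-s}$ nearly constant on $\Iff01$, which removes any possible cancellation.
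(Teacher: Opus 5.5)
Your proof is correct, but it takes a different route from the paper's.

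\textbf{The paper's route.} It argues by partial summation, in the spirit of Jensen's lemma. If the series converged at $s_0+it$, then $S_p=\sum_{q\leq p}n_q^{-s_0}$ would be $o(\log n_p)$. Counting the admissible integers of each length refutes this: $S_{p_\ell}>\ell N_1/N$, while $\log n_{p_\ell}<\ell\log b$.

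\textbf{Your route.} You violate the Cauchy criterion directly, using the cylinder blocks $[pb^l,(p+1)b^l)$. Because $b^{s_0}=N$, each block sum is, up to the unimodular factor $b^{-ilt}$, the average of $(p+x)^{-s}$ over $N^l$ points $x\in[0,1)$. Your display has the typo $b^{-ils}$ where $b^{-ilt}$ is meant. The estimate $|(p+x)^{-s}-p^{-s}|\leq|s|p^{-s_0-1}$ holds since $\Re s\geq 0$, so your constant can be taken as $C=1$. Hence every block sum has modulus at least $p^{-s_0}/2$ once $p>2|s|$.

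\textbf{What each approach buys.} The paper's mechanism uses the digit structure only through a count. It works for any Dirichlet series with non-negative coefficients whose partial sums at the real abscissa grow at least logarithmically along a subsequence. Yours exploits the self-similarity of the admissible set directly and needs no summation by parts. It also gives an explicit, quantitative obstruction: contiguous blocks of modulus at least $p^{-s_0}/2$ occur arbitrarily far out.

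Your closing remark, that the full blocks $K_l(s)$ cannot be used because of possible cancellation, is well founded. The paper's own computation in the zeta section shows $\sum_{b^{\ell-1}\leq n<b^{\ell}}n^{-1-2\pi ik/\log b}\to 0$ for $k\neq0$.
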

\begin{proof}
  Suppose to the contrary that it converges for some $s=s_0+it$.  Let
  $n_1<n_2<\dots<n_q<\dots$ be the positive admissible integers.  Let $r_p =
  \sum_{q=p}^\infty n_q^{-s} = o(1)$. We do a summation by parts:
  \begin{equation*}
    S_p\coloneq \sum_{q=1}^p n_q^{-s_0} = \sum_{q=1}^p(r_q - r_{q+1})n_q^{it} = r_1 n_1^{it} + \sum_{q=2}^{p}r_q\bigl(n_{q}^{it}-n_{q-1}^{it}\bigr) -r_{p+1} n_p^{it}.
  \end{equation*}
  From $n_q^{it}-n_{q-1}^{it}=\int_{n_{q-1}}^{n_q} it x^{it-1}\dx$, or from
  $|\e^{i\theta}-1|\leq|\theta|$, we get $\big|n_q^{it}-n_{q-1}^{it}\bigr|
  \leq |t|\log\frac{n_q}{n_{q-1}}$. The summation on the right-hand side,
  restricted to the indices $q$ such that $p_0<q\leq p$ with some fixed $p_0$
  is thus bounded by $|t|(\sup_{q>p_0}|r_q|)\log(n_pn_{p_0}^{-1})$.  Bounding
  trivially the finitely many other terms, it follows:
    \begin{equation*}
      \lim_{p\to\infty} \frac{S_p}{\log n_p} = 0.
    \end{equation*}
    But this is wrong.  To see it, take $\ell\geq 1$ and $p_\ell= N_1 + N_1
    N + \dots + N_1 N^{\ell-1}$ so that $n_{p_\ell}$ is the largest
    admissible integer which is $<b^\ell$.  Thus $n_{p_l}< b^{\ell}$. And
    $S_{p_\ell} > N_1b^{-s_0} + N_1N b^{-2s_0} + N_1 N^2 b^{-3s_0} + \dots + N_1
    N^{\ell-1}b^{-\ell s_0} = \ell N_1N^{-1}$. Hence, $S_{p_\ell}$ is not $o(\log
    n_{p_\ell})$.
\end{proof}

In \cite{burnolzeta}, the following quantities are defined for $\Re s > s_0$,
and $m\geq0$:
\begin{equation}\label{eq:defu}
  u_m(s) = 0^m  + 
         \sum_{l=1}^\infty
         \sum_{(a_1, \dots, a_l)\in A^l}\bigl(a_1b^{-1} + \dots + a_l b^{-l}\bigr)^m\, b^{-ls}. 
\end{equation}
They are the moments of a certain complex measure on the set of $b$-words, but
we will not use this vocabulary here. The defining series for $u_m(s)$
converges if and only if it converges absolutely if and only if $\Re s > s_0$
(use that the sum of powers is bounded below by $N_1N^{l-1}b^{-m}$ and above
by $N^l$).  One has
\begin{equation*}
  u_0(s) = \frac{b^s}{b^s - N}\,,
\end{equation*}
and the recurrence
relation (\cite[Prop.\@ 1]{burnolzeta}):
\begin{equation}\label{eq:recum}
  m\geq1\implies
  (b^{s+m} - N)u_m(s) = \sum_{j=1}^{m} \binom{m}{j}(\sum_{a\in A}a^j) u_{m-j}(s),
\end{equation}
which will be the basis for our further investigations in this section.

We use the following notation for power sums
over the admissible digits:
\begin{equation*}
  \gamma_j  = \sum_{a\in A} a^j.
\end{equation*}
In particular, $\gamma_0 = N$.  We proved (\cite[Thm.\@ 1]{burnolzeta}) the
following representation of $K(s)$ in its half-plane of convergence.  For any
choice of a \emph{level} $\ell\geq 1$, and any $s$ with $\Re s > s_0$:
\begin{equation}
  \label{eq:Kgeo}
      K(s) = \sideset{}{'_{0<n<b^{\ell-1}}}\sum \frac1{n^s}
      + \sum_{m=0}^\infty (-1)^m \frac{(s)_{m}}{m!}u_{m}(s)
              \sideset{}{'_{b^{\ell-1}\leq n<b^{\ell}}}\sum \frac1{n^{m+s}}.
\end{equation}
We will simply use $\ell=2$ in the sequel (geometric convergence may not apply
if $\ell=1$, see \cite[Thm.\@ 1]{burnolzeta} for details).  The notation
$(s)_m$ is the Pochhammer symbol (partial ascending factorial with $m$ terms).

From \eqref{eq:recum} and $u_0(s) = b^s/(b^s - N)$, it is clear that all the
$u_m(s)$ extend meromorphically to the entire complex plane, with simple
\emph{candidate} poles at the points of the left semi-lattice:
\begin{equation*}
  \Delta = \{s_{m,k} = s_0 - m + k\frac{2\pi i}{\log b}, m\in \NN, k\in \ZZ\}.
\end{equation*}
In the case with $N=b$ (and also $N=1$, see the next section) there are
explicit alternative formulas for the $u_m(s)$'s.  If $N=b$ one sees that they
have poles only for $\Re s\in \{1, 0, -1, -3, -5, \dots\}$. The identical
poles coming from multiple $u_m(s)$'s conspire to cancel out, and in the end
only the pole at $s=1$ remains for $\zeta(s)$ itself...

Our aim in this section is to prove that the series at the right-hand side of
\eqref{eq:Kgeo} converges everywhere in $\CC\setminus\Delta$, thus providing
the analytic continuation of $K(s)$. We will prove this in a slightly stronger
form, so that it will be shown that $K(s)$ is meromorphic with only simple
poles.  The proof is elementary but needs some notation and care. The key
ingredient is the following Lemma:
\begin{lem}\label{lem:key}
  Let $p$ be a positive integer.  Let $0<\eta$. Let $(c_m)_{m\geq0}$ be the
  sequence of positive real numbers, depending upon $p$ and $\eta$, and
  defined by the conditions:
  \begin{gather*}
    c_0 = c_1 = \dots = c_{p-1} = 1\\
   m\geq p\implies (b^{1+\eta+m-p} - N)c_m =
   \sum_{j=1}^m\binom{m}{j}\gamma_j c_{m-j}.
  \end{gather*}
  Let $f=\max A$ and $\lambda = f/(b-1)$.  There exists an (explicit)
  continuous function $C_p$ on the positive real numbers such that $c_m \leq
  C_p(\eta)\lambda^m m^{p-1-\eta}$ for every $m\geq p$.
\end{lem}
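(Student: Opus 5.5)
The plan is an induction on $m$ against an explicit comparison sequence. First I normalize the growth: set $d_m=c_m/\lambda^m$ and $\alpha=p-1-\eta$. Since $f^j=\lambda^j(b-1)^j$, the recurrence becomes
\begin{equation*}
  (b^{1+\eta+m-p}-N)\,d_m=\sum_{j=1}^m\binom mj\Bigl(\sum_{a\in A}\bigl(\tfrac{a(b-1)}{f}\bigr)^j\Bigr)d_{m-j}.
\end{equation*}
I split the digit sum into the top digit $a=f$, which has weight $(b-1)^j$, and the digits $a<f$. For the latter, $a(b-1)/f\le (f-1)(b-1)/f=:\rho<b-1$.

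\emph{Top digit.} Up to the missing $j=0$ term, $\sum_j\binom mj(b-1)^jd_{m-j}$ equals $b^m\,\mathbb E[d_X]$ with $X\sim\mathrm{Bin}(m,1/b)$. Under the inductive hypothesis $d_k\le Ck^\alpha$, this is about $b^mC(m/b)^\alpha$. Dividing by $b^{1+\eta+m-p}$ then gives $C m^\alpha b^{-\alpha-1-\eta+p}=Cm^\alpha$. So the exponent $\alpha=p-1-\eta$ is exactly the one that makes the induction self-consistent. This is why the statement has $m^{p-1-\eta}$, and why the constant must be controlled uniformly in $\eta$ on compacts.

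\emph{Lower digits and the factor $N$.} The digits $a<f$ contribute at most $(N-1)(1+\rho)^m\max_{k\le m}d_k$, which is at most $(N-1)\,C m^{\max(\alpha,0)}(1+\rho)^m$. Since $1+\rho<b$, after division by $b^{m}$ this is exponentially small relative to the main term. Similarly, replacing $b^{1+\eta+m-p}-N$ by $b^{1+\eta+m-p}$ costs a factor $1/(1-Nb^{p-1-\eta-m})$. Products of such factors over $m$ converge, so all of these errors can be absorbed into the constant. The finitely many initial indices, namely $m<m_0(p,\eta)$ beyond which the asymptotic estimates hold, are handled by the explicit recurrence. This yields a continuous dependence on $\eta$.

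\emph{Main obstacle.} The genuine difficulty is that $\mathbb E[X^\alpha]=(m/b)^\alpha(1+O(1/m))$ when $\alpha\notin[0,1]$: by Jensen, the error has the wrong sign for $\alpha>1$ and $\alpha<0$. A naive induction would therefore multiply $C$ by $\prod(1+K/m)$, which diverges. To get around this I would replace $k^\alpha$ by a comparison sequence whose binomial averages are computable exactly or with error $O(m^{-2})$. My first attempt is the Gamma ratio $w_k=\Gamma(k+1)/\Gamma(k+1-\alpha)$ (shifted by a constant if $\alpha$ is large, to avoid poles). It is asymptotic to $k^\alpha$, and for integer $\alpha$ its binomial mean is exactly $b^{-\alpha}w_m$, because factorial moments of the binomial are exact. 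For non-integer $\alpha$ I would interpolate by writing $w_k$ as a Beta-type integral, $\int_0^1 t^{k}\,\phi(t)\,\mathrm dt$. Its binomial average is then $\int(1-\tfrac1b+\tfrac tb)^m\phi(t)\,\mathrm dt$, which can be compared to $b^{-\alpha}w_m$ with a relative error $O(m^{-2})$, or with a favorable sign. If this works, the product of correction factors converges, and one obtains $d_m\le C_p(\eta)\,m^{p-1-\eta}$ with $C_p$ continuous in $\eta>0$.
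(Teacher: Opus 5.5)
Your architecture is sound, and your comparison sequence is in fact the paper's. With $\tau=1+\eta$ and $n=m-p$, the paper's normalization $d_n=\frac{(\tau+1)_n}{(p+n)!}c_{p+n}$ equals $c_m/(\Gamma(\tau+1)w_m)$ for your $w_k=\Gamma(k+1)/\Gamma(k+1-\alpha)$, $\alpha=p-\tau$.

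\textbf{The gap.} The estimate everything hinges on is never proved. You need that the top-digit binomial average of $w$ is at most $b^{-\alpha}w_m$, up to a summable relative error. You leave this as ``if this works'', and the mechanism you offer does not cover the case that matters. A representation $w_k=\int_0^1 t^k\phi(t)\,\mathrm{d}t$ exists only for $\alpha<0$. There $\phi(t)=(1-t)^{-\alpha-1}/\Gamma(-\alpha)$ works, and the substitution $u=1-\frac1b+\frac tb$ gives the bound exactly, with the favorable sign. For $\alpha>0$, however, $w_k$ is unbounded, so it is not the moment sequence of any finite measure on $[0,1]$; the Beta integral diverges, and a shift does not change this. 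Yet $\alpha=p-1-\eta>0$ is exactly what Theorem~\ref{thm:main} needs ($\alpha=-\Re s$ there). As written, your argument therefore does not reach the left half-plane.

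A smaller point concerns the initial values $c_0,\dots,c_{p-1}$. They enter \emph{every} step of the recurrence, not just finitely many, and $w_k$ may vanish or change sign for $k<p$. So they cannot sit inside the inductive hypothesis $d_k\le Cw_k$. They must be carried as an inhomogeneous term (the paper's $B_n$). This is harmless, since its size relative to the main term is $O(m^{\eta}(1-b^{-1})^m)$.

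\textbf{How to close it.} The missing step is short once you notice that, for $0\le j\le n$,
\[
\binom{m}{j}w_{m-j}=\frac{m!}{j!\,\Gamma(n+\tau-j+1)}=w_m\binom{n+\tau}{j}.
\]
So the top digit contributes, over $m-j\ge p$, exactly $w_m\sum_{j=1}^n\binom{n+\tau}{j}(b-1)^j$. Taylor's formula for $(1+x)^{n+\tau}$ at order $n$ has remainder $\binom{n+\tau}{n+1}(1+\xi)^{\tau-1}x^{n+1}\ge0$, because $\tau>0$. Hence $\sum_{j=0}^n\binom{n+\tau}{j}x^j\le(1+x)^{n+\tau}$ for $x\ge0$, and the top digit costs at most $(b^{n+\tau}-1)w_m$, with no error term. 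In your own language, you can equivalently write $w_k=k^{\underline{r}}\,g(k-r)$ with $r=\lceil\alpha\rceil$ and use exact factorial moments to reduce to your $\alpha<0$ case.

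Now combine this with three further bounds:
\begin{itemize}
\item your lower-digit bound $(N-1)(1+\rho)^{n+\lceil\tau\rceil}w_m$ (via $\binom{n+\tau}{j}\le\binom{n+\lceil\tau\rceil}{j}$);
\item the factor coming from $-N$;
\item the initial-value term.
\end{itemize}
You get $C_m\le C_{m-1}(1+\epsilon_n)+\delta_n$, with $\sum\epsilon_n$ and $\sum\delta_n$ converging locally uniformly in $\eta$, and the lemma follows.

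\textbf{Comparison with the paper.} The paper avoids all these perturbation products. With the same normalization, it uses the sharper inequality $\sum_{j=1}^n\binom{n+\sigma}{j}x^j\le(1+x)^{n+\sigma}-1-x^{n+\sigma}$ for $\sigma\ge1$. The extra $-x^{n+\sigma}$ telescopes over the digits $f,f-1,\dots$ and yields $\sum_{j=1}^n\binom{\tau+n}{j}\gamma_j\lambda^{-j}\le b^{\tau+n}-N$ for all digits at once. This gives a plain maximum principle $D_n\le D_{n-1}+(\text{summable})$.
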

\begin{rema}
  For $N<b$ we could take $\eta=0$, as the only problem
  with $\eta=0$ is with the definition of $c_p$ if $b-N=0$.
\end{rema}
\begin{proof}[Proof of Lemma \ref{lem:key}]
  We let $\tau=1+\eta$.
  Let $n\geq0$ and $m=n+p$. We split the binomial sum on the right-hand side of
  the recurrence as $A_n + B_n$ where $B_n$ stands for the contributions of
  the last $p$ terms.  We will take care of $B_n$ later.
  The recurrence relation is:
  \begin{equation*}
    (b^{\tau+n}-N)c_{p+n} = \sum_{j=1}^n \binom{p+n}{j}\gamma_j c_{p+n-j} + B_n.
  \end{equation*}
  Let us now make the definition, for $n\geq0$:
  \begin{equation*}
    d_n =\frac{(\tau+1)_{n}}{(p+n)!} c_{p+n}
 = \frac{(\tau+1)\dots(\tau+n)}{(p+n)!}c_{p+n}.
  \end{equation*}
  We compute:
  \begin{align*}
(b^{\tau+n}-N)d_{n} &= 
    \frac{(\tau+1)_n}{(p+n)!} A_n +  \frac{(\tau+1)_n}{(p+n)!} B_n \\
&=
    \sum_{j=1}^n \frac{(p+n)!}{j!(p+n-j)!}\gamma_j \frac{(\tau+1)_n}{(p+n)!}
                \frac{(p+n-j)!}{(\tau+1)_{n-j}}d_{n-j} 
+  \frac{(\tau+1)_n}{(p+n)!} B_n \\
&=
    \sum_{j=1}^n \frac{(\tau+n)\dots(\tau+n-j+1)}{j!}\gamma_j d_{n-j}
+  \frac{(\tau+1)_n}{(p+n)!} B_n.
  \end{align*}
We claim that
  \begin{equation}\label{eq:claim}
    \sum_{j=1}^n \frac{(\tau+n)\dots(\tau+n-j+1)}{j!}\gamma_j \lambda^{-j}
\leq b^{\tau+n}-N.
  \end{equation}
  The argument (for which $\tau\geq1$ is important) was given in \cite[Proof
  of Prop.\@ 5]{burnolzeta}.  For the convenience of the reader, we repeat it
  partially here.  First observe that $\gamma_j$ can be written as $\sum_{a\in
    A_1} a^j$ as $j\geq1$.  We have to evaluate:
 \begin{equation*}
\begin{split}   \sum_{j=1}^n \frac{(\tau+n)\dots(\tau+n-j+1)}{j!}
         \sum_{a\in A_1}\bigl(\frac{(b-1)a}{f}\bigr)^{j}
\\\leq 
   \sum_{j=1}^n \frac{(\tau+n)\dots(\tau+n-j+1)}{j!}
         \sum_{a=f,f-1,\dots,f-N_1+1}\bigl(\frac{(b-1)a}{f}\bigr)^{j}.
\end{split}
 \end{equation*}
 The following calculus lemma (\cite[Lem.\@ 3]{burnolzeta}) holds: \emph{Let
   $\sigma\geq1$ be a real number, $n$ a non-negative integer, and $x$ a
   non-negative real number.  Then}
\begin{equation*}
  \sum_{j=1}^n \frac{(n+\sigma)\dots(n+\sigma-j+1)}{j!} 
              x^j \leq (1+ x)^{n+\sigma}- 1 - x^{n+\sigma}.
\end{equation*}
Thus, if $b-1=f$ we obtain directly a telescopic sum as upper bound. If
$b-1>f$, one needs only make a simple additional remark to again reduce to a
telescopic sum, see \cite[Proof of Prop.\@ 5]{burnolzeta}. This gives us:
\begin{equation*}
  \textit{currently evaluated sum}\leq b^{\tau+n}-N_1 -1 \leq b^{\tau+n} - N,
\end{equation*}
which validates claim \eqref{eq:claim}.  Let now, for $k\geq0$,
$D_k=\max_{0\leq j\leq k} \lambda^{-j}d_{j}$ (in particular, $D_0=d_0=\frac{c_p}{p!}$),
so that $d_{n-j}\leq \lambda^{n}\lambda^{-j}D_{n-1}$ for $1\leq j \leq n$.  We
get, using \eqref{eq:claim}:
\begin{align*}
  (b^{n+\tau}-N)d_n &\leq \lambda^n( b^{\tau+n} - N) D_{n-1}
+  \frac{(\tau+1)_n}{(p+n)!} B_n\\
\lambda^{-n}d_n &\leq D_{n-1} 
+ \frac{(\tau+1)_n}{(p+n)!} \frac{\lambda^{-n}B_n}{b^{n+\tau}-N}\\
  D_n &\leq D_{n-1}
+ \frac{(\tau+1)_n}{(p+n)!} \frac{\lambda^{-n}B_n}{b^{n+\tau}-N}
\leq \sum_{n=0}^\infty \frac{(\tau+1)_n}{(p+n)!} 
         \frac{\lambda^{-n}B_n}{b^{n+\tau}-N}.
\end{align*}
(The $n=0$ term is indeed $D_0=d_0$).  Let $D^p(\tau)$ be the series on the
right-hand side. We show that it is a continuous function of $\tau>0$.  There
holds (replacing in the last step $(n+p-k)!$ by $(n+1)!$):
  \begin{equation*}
    B_n =  \sum_{j=n+1}^{n+p} \binom{n+p}{j}\gamma_j 
        = \sum_{k=0}^{p-1}\binom{n+p}{k}\gamma_{n+p-k}
        \leq N_1\sum_{k=0}^{p-1}\frac{(n+p)!}{k!(n+1)!}f^{n+p-k},
  \end{equation*}
  and it is thus enough
 to examine if the following series converges uniformly
  for $\tau$ in a compact sub-interval of $\Ioo1\infty$ (recall
  $\tau=1+\eta$):
\begin{equation*}
   \sum_{n=0}^\infty \frac{(\tau+1)_n}{(n+1)!}\frac{(b-1)^n}{b^{n+\tau}-N}.
\end{equation*}
Suppose $1<\tau_0\leq \tau\leq \tau_1<\infty$. Then the above is bounded termwise
by 
\begin{equation*}
 \sum_{n=0}^\infty \frac{(\tau_1+1)_n}{(n+1)!}\frac{(b-1)^n}{b^{n+\tau_0}-N}.
\end{equation*}
The latter series converges by the ratio test.  Consequently
$D^p(\tau)<\infty$ with locally uniform convergence, hence it is a continuous
function such that $D_n\leq D^p(\tau)$ for every $n\geq0$.  Hence, $d_n \leq
D^p(\tau)\lambda^n$ and consequently
\begin{align*}
  m\geq p \implies c_m &\leq D^p(\tau)\frac{m!}{(\tau+1)_{m-p}}\lambda^{m-p}\\
\implies c_m(m-p)^\tau m^{-p}\lambda^{-m}&\leq \lambda^{-p} D^p(\tau)\frac{(m-p)!(m-p)^{\tau}}{(\tau+1)_{m-p}}
\end{align*}
The last equation is useful only for  $m\geq p+1$.
Let now $\alpha_n= n!n^\tau/(\tau+1)_n$ for $n\geq1$.  As is well-known,
$\alpha_n$ converges toward $\Gamma(\tau+1)$, and of course this is uniform
for $\tau$ bounded.  In fact, the sequence $(\alpha_n)$ is increasing, 
and thus bounded above by $\Gamma(\tau+1)$.  To check that, i.e.\@ to prove that
\begin{equation*}
  \frac{(n+1)(n+1)^\tau n^{-\tau}}{n+1+\tau}\geq 1,
\end{equation*}
it is enough to show that the logarithmic derivative with respect to $\tau$ of
the left-hand side is non-negative on $\Ifo0\infty$, and actually it is
positive:
\begin{equation*}
  \log(n+1)-\log(n) - \frac{1}{n+1+\tau}=\int_n^{n+1}\frac{\dt}t - \frac1{n+1+\tau}>0.
\end{equation*}
So, for $m>p$ we obtain
\begin{equation*}
   c_m\leq \bigl(1-\frac pm\bigr)^{-\tau} \lambda^{-p} D^p(\tau)\Gamma(\tau+1)\lambda^mm^{p-\tau}\leq \bigl(p+1\bigr)^{\tau} \lambda^{-p} D^p(\tau)\Gamma(\tau+1)\lambda^mm^{p-\tau} 
\end{equation*}
And as $c_p \leq D^p(\tau)p!$, the above upper bound holds also for $m=p$. As
$\tau=1+\eta$, this completes the (somewhat long) proof of Lemma
\ref{lem:key}.
\end{proof}

We can now establish straightforwardly our main objective. Recall $\lambda =
f(b-1)^{-1}$, $f = \max A$.
\begin{theo}\label{thm:main}
  Let $\alpha(s)$ be the entire function $\prod_{n=0}^\infty (1 - b^{-s-n}N)$.
  One has, uniformly on every compact subset of the complex plane:
  \begin{equation*}
    \alpha(s) \frac{(s)_m}{m!} u_m(s) =_{m\to\infty} O\bigl(\lambda^m m^{-1}\bigr).
  \end{equation*}
\end{theo}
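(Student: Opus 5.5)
Fix a compact set $\mathcal K\subset\CC$. Choose an integer $p\geq1$ large enough that $\Re s + p \geq 1+\eta+\epsilon$ for all $s\in\mathcal K$, with some fixed $\eta>0$. The plan is to compare $|u_m(s)|$ for $m\geq p$ with the majorant sequence $(c_m)$ of Lemma~\ref{lem:key}, built with these $p$ and $\eta$. The poles are handled separately: only the finitely many factors $b^{s+m}-N$ with $m<p$ can vanish on $\mathcal K$. These are killed by the finitely many factors $(1-b^{-s-n}N)$, $0\le n<p$, of $\alpha(s)$. The remaining tail $\prod_{n\geq p}(1-b^{-s-n}N)$ is bounded on $\mathcal K$.

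\textbf{Step 1 (initial data).} Set $v_m(s)=\prod_{n=0}^{p-1}(1-b^{-s-n}N)\,u_m(s)$. By the recurrence \eqref{eq:recum} and $u_0=b^s/(b^s-N)$, each $v_m$ with $m<p$ is entire, because every denominator $b^{s+j}-N$ with $j<p$ is cancelled. So $M=\max_{m<p}\sup_{\mathcal K}|v_m|<\infty$. For $m\geq p$ the $v_m$ satisfy the same linear recurrence \eqref{eq:recum}, since that recurrence is linear with $s$-dependent coefficient only on the left.

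\textbf{Step 2 (comparison).} For $m\geq p$ and $s\in\mathcal K$ we have $|b^{s+m}-N|\geq b^{\Re s+m}-N\geq b^{1+\eta+m-p}-N>0$, using $\Re s+m\geq 1+\eta+(m-p)$. Since the $\gamma_j\geq0$, induction gives $|v_m(s)|\leq M c_m$ for all $m$: the case $m<p$ is Step 1, and the inductive step is exactly the defining recurrence of the $c_m$. By Lemma~\ref{lem:key}, $|v_m(s)|\le M C_p(\eta)\lambda^m m^{p-1-\eta}$ for $m\ge p$, uniformly on $\mathcal K$.

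\textbf{Step 3 (Pochhammer factor).} For $m\ge p$ we have $|(s)_m/m!|\le \prod_{k=0}^{m-1}|s+k|/(k+1)$. Split off the first $p$ factors, which are bounded on $\mathcal K$. The rest is comparable to $\prod_{k=p}^{m-1}(1+(\Re s-1)/(k+1)+O(k^{-2}))$, which is $O(m^{\Re s-1})$ uniformly on $\mathcal K$, as in the Gamma-function argument $\alpha_n\to\Gamma(\tau+1)$ used in the Lemma. Combining the steps gives $\alpha(s)\frac{(s)_m}{m!}u_m(s)=O(\lambda^m m^{p-1-\eta+\Re s-1})$.

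\textbf{Main obstacle: the exponent.} We need $p-\eta+\Re s-2\le -1$, i.e.\ $\Re s+p\le 1+\eta$. This competes with the requirement $\Re s+p\geq 1+\eta$ from Step 2, so the choices must match exactly. The resolution is to avoid a single $p$ for all of $\mathcal K$. Cover $\mathcal K$ by finitely many compact pieces. On each piece, pick $p$ as the least integer with $\min_{\text{piece}}\Re s+p>1$, and set $1+\eta=\min_{\text{piece}}\Re s+p$. Step 2 then holds, and the exponent becomes $\Re s-\min\Re s-1$, which is at most $-1+\delta$ on a piece of real-width $\delta$. To remove the $\delta$, I would instead choose $\eta$ so that $1+\eta\le \Re s+p$ with equality attained only at the left edge. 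More simply, I would use the slack $\lambda^m$ versus a slightly smaller ratio, or tighten the estimate in Step 3 using $(s)_m/m!\sim m^{s-1}/\Gamma(s)$. The continuity of $C_p(\eta)$ in the Lemma is what makes the constants uniform across these pieces. I expect this careful bookkeeping of the exponent, to land exactly at $m^{-1}$, to be the delicate point.
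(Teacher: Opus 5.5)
Steps~1--3 are sound and follow the same structure as the paper's proof: cancel the finitely many poles to get bounded initial data, majorize by the sequence $(c_m)$ of Lemma~\ref{lem:key} using positivity of the $\gamma_j$, and use $|(s)_m/m!|=O(m^{\Re s-1})$ uniformly on compacts. \textbf{The gap is exactly where you locate it, and none of your proposed remedies closes it.} As written, the proposal proves only $O(\lambda^m m^{-1+\delta})$ for every $\delta>0$, not the stated $O(\lambda^m m^{-1})$.

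With $\eta$ fixed on a piece and $1+\eta\leq\min\Re s+p$, the Lemma gives the factor $m^{p-1-\eta}$ with $p-1-\eta\geq-\min\Re s\geq-\Re s$. Equality holds only on the left edge of the piece. So finitely many pieces of real-width $\delta$ give $O(\lambda^m m^{-1+\delta})$ and never better. Your variant with ``equality attained only at the left edge'' is the same construction, so it does not remove $\delta$ either. There is no slack in $\lambda^m$: the Lemma gives exactly $\lambda^m$, the statement asks for exactly $\lambda^m$, and a factor $m^\delta$ cannot be absorbed without enlarging the ratio. Step~3 is already sharp, since $|m^{s-1}|=m^{\Re s-1}$. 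The loss comes entirely from comparing, in Step~2, with a majorant whose parameter does not track $\Re s$.

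\textbf{The missing idea is to choose the majorant pointwise.} Pick $p$ with $\Re s\geq 2-p$ on $\mathcal K$, and for each $s\in\mathcal K$ take $\eta=\eta(s)=\Re s+p-1\geq1$. Then $1+\eta+m-p=\Re s+m$ exactly. The inequality $|b^{s+m}-N|\geq b^{\Re s+m}-N>0$ therefore feeds straight into the defining recurrence of $c_m=c_m(\Re s)$, and the induction gives $|v_m(s)|\leq M c_m$ with $M$ uniform on $\mathcal K$. The Lemma then yields $|v_m(s)|\leq M\,C_p(\eta(s))\,\lambda^m m^{p-1-\eta(s)}=M\,C_p(\eta(s))\,\lambda^m m^{-\Re s}$. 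This cancels exactly against $|(s)_m/m!|=O(m^{\Re s-1})$, giving $m^{-1}$. No $\epsilon$ is needed.

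Uniformity on $\mathcal K$ is where the continuity of $C_p$ is genuinely used: $\eta(s)$ ranges over a compact subinterval of $(0,\infty)$, on which $C_p$ is bounded. With finitely many pieces, continuity would be irrelevant, since a maximum of finitely many constants is automatically finite. This pointwise choice is exactly how the paper proves the theorem.
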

\begin{proof}
  Let $Q\subset \CC$ be some non-empty compact subset and choose $p$ a
  positive integer such that $\Re s \geq 2 - p$ for every $s\in Q$.  For any
  $s\in Q$, we let $\eta(s) = \Re s + p -1 \geq1$.  The analytic coefficients
  $v_m(s) = \alpha(s) u_m(s)$, $m\geq0$, verify the recurrence
  \begin{equation*}
    (b^{m+s} - N)v_m(s) = \sum_{j=1}^{m} \binom{m}{j}\gamma_j v_{m-j}(s).
  \end{equation*}
  The sequence of positive real numbers $(c_m)$ from Lemma \ref{lem:key}, for
  this choice of $p>0$ and $\eta=\eta(s)\geq1$, verify the recurrence,
  starting at $m=p$,
  \begin{equation*}
    m\geq p \implies (b^{m+\Re s} - N)c_m 
= \sum_{j=1}^{m} \binom{m}{j}\gamma_j c_{m-j}\,,
  \end{equation*}
  and with initial conditions $c_0 = \dots = c_{p-1} = 1$.

  Observe that $|b^{m+s} -N|\geq b^{m+\Re s} - N\geq b^2-N>0$ for any $m\geq
  p$.  Thus, as the binomial coefficients and power sums are positive, an
  induction proves that
  \begin{equation*}
    \forall m\geq0\quad  
    |v_m(s)| \leq c_m \max_{0\leq j\leq p-1}\|v_j(s)\|_\infty,
  \end{equation*}
  where the sup-norms are computed on $Q$.
  Hence, according to the conclusion of Lemma \ref{lem:key}, and setting
  \begin{equation*}
    M_Q =  \max_{0\leq j\leq p-1}\|\alpha(s)u_j(s)\|_\infty, \qquad
    M_Q' = M_Q \sup_{s\in Q} C_p(\eta(s)),
  \end{equation*}
  we get, using  $p-1-\eta(s) = -\Re s$:
  \begin{equation*}
    \forall m\geq p\quad  
    |\alpha(s)u_m(s)| \leq M_Q C_p(\eta(s))\lambda^m m^{p-1-\eta(s)} 
    \leq M_Q'\lambda^m m^{-\Re s}.
  \end{equation*}
  It is known that
  \begin{equation*}
    \frac{m^{1-s}(s)_m}{m!} \mathop{\longrightarrow}\limits_{m\to\infty} \frac1{\Gamma(s)},
  \end{equation*}
  uniformly on every compact subset of the complex plane. This completes the
  proof of the uniform $O(\lambda^m m^{-1})$ estimate for $\alpha(s)(s)_m
  (m!)^{-1}u_m(s)$.
\end{proof}
\begin{cor}\label{cor:main}
  The series, whose terms are entire functions,
    \begin{equation}
      \label{eq:foo}
      \sum_{m=0}^\infty(-1)^m\frac{(s)_m}{m!} \alpha(s) u_m(s)
          \sum_{\substack{a_1\in A_1\\[.5\jot]a_2\in A}}(a_1b+a_2)^{-s-m},
    \end{equation}
    converges everywhere in the complex plane and defines an entire function
    which gives the analytic continuation of $\alpha(s)\Bigl(K(s) -
    \sum_{a\in A_1} a^{-s}\Bigr)$.  Consequently, $K(s)$ is a meromorphic
    function in the complex plane with candidate simple poles at the zeros of
    $\alpha(s)=\prod_{n=0}^\infty (1 - b^{-s-n}N)$.
\end{cor}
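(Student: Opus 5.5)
The plan is to combine the uniform estimate of Theorem \ref{thm:main} with a uniform geometric bound on the inner finite sums, and then identify the resulting entire function with $\alpha(s)\bigl(K(s)-\sum_{a\in A_1}a^{-s}\bigr)$ on the half-plane $\Re s>s_0$. Here \eqref{eq:Kgeo} with level $\ell=2$ gives exactly this representation. The first sum there is over admissible $0<n<b$, i.e.\@ over $n=a\in A_1$. The second inner sum runs over admissible $n$ with $b\leq n<b^2$, i.e.\@ over $n=a_1b+a_2$ with $a_1\in A_1$, $a_2\in A$.

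\textbf{Entirety of the terms.} $\alpha(s)$ is entire: for $s$ in a compact set, $|b^{-s-n}N|\le N b^{-\Re s-n}$ is summable in $n$, so the product converges locally uniformly. Each $u_m(s)$ is meromorphic, by \eqref{eq:recum} and $u_0(s)=b^s/(b^s-N)$, with at most simple poles located at the points $s$ with $b^{s+j}=N$ for some $0\le j\le m$. These are simple zeros of the factor $1-b^{-s-j}N$ of $\alpha$. Hence $\alpha(s)u_m(s)$ is entire; equivalently, as noted in the proof of Theorem \ref{thm:main}, $v_m=\alpha u_m$ satisfies the recurrence with entire $v_0=\alpha(s)b^s/(b^s-N)$, whose denominator is cancelled by the $n=0$ factor of $\alpha$. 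The Pochhammer factor and the finite Dirichlet polynomial $\sum(a_1b+a_2)^{-s-m}$ are entire.

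\textbf{Uniform convergence.} Fix a compact $Q$. For $s\in Q$ and $n=a_1b+a_2\geq b$, we have $|n^{-s-m}|=n^{-\Re s}n^{-m}\le C_Q\,b^{-m}$, uniformly on $Q$. So the inner sum is $O(b^{-m})$ uniformly, with at most $N_1N$ terms. By Theorem \ref{thm:main} the $m$-th term is then $O\bigl(\lambda^m m^{-1}b^{-m}\bigr)$ uniformly on $Q$. Since $\lambda=f/(b-1)\le1<b$, the ratio $\lambda/b<1$, so the series is dominated by a convergent geometric series. By Weierstrass' theorem its sum is entire.

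\textbf{Identification and poles.} For $\Re s>s_0$, multiplying \eqref{eq:Kgeo} (with $\ell=2$) by $\alpha(s)$ shows that the entire function coincides with $\alpha(s)(K(s)-\sum_{a\in A_1}a^{-s})$. By uniqueness of analytic continuation it provides the continuation to all of $\CC$. Dividing by $\alpha$ and adding back the entire finite sum $\sum_{a\in A_1}a^{-s}$ shows that $K$ is meromorphic, with poles only at the zeros of $\alpha$. Each zero of $\alpha$ is simple: distinct factors $1-b^{-s-n}N$ vanish on sets whose real parts $s_0-n$ are distinct, and each factor has simple zeros because its derivative there is $\log b\neq0$. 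Hence the candidate poles are simple.

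The only genuinely delicate step is the uniform estimate, which is already provided by Theorem \ref{thm:main}. The main care left is in checking the cancellation of poles in $\alpha u_m$ and in observing $\lambda<b$.
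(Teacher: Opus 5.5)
Your proof is correct and follows the paper's route. You combine the uniform bound of Theorem \ref{thm:main} with the identification, on $\Re s>s_0$, of the series with $\alpha(s)\bigl(K(s)-\sum_{a\in A_1}a^{-s}\bigr)$ via \eqref{eq:Kgeo} at level $\ell=2$, then conclude by locally uniform convergence and uniqueness of continuation. \eqref{eq:Kgeo} is the right reference here; the paper's one-line proof cites \eqref{eq:Kmain} at this point, apparently by a slip.

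You also supply the details the paper leaves implicit: the entirety of $\alpha u_m$, the $O(b^{-m})$ bound on the two-digit block giving ratio $\lambda/b<1$, and the simplicity of the zeros of $\alpha$. The only loose spot is your ``equivalently'' aside: the recurrence for $v_m=\alpha u_m$ with entire $v_0$ does not by itself rule out poles created by dividing by $b^{s+m}-N$. This does not matter, because your main pole-location argument for $u_m$ already settles entirety.
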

\begin{proof}
  Immediate consequence of Theorem \ref{thm:main}: indeed, equation
  \eqref{eq:Kmain} tells us that for $\Re s > s_0$ the series of Equation
  \eqref{eq:foo} converges toward $\alpha(s)\Bigl(K(s) - \sum_{a\in A_1}
  a^{-s}\Bigr)$.  As it converges locally uniformly everywhere it provides the
  analytic continuation.
\end{proof}

\section{The case with \texorpdfstring{$N=1$}{N=1}}

Suppose $N=1$.  There is only
one admissible integer of length $l$, and it is $\lambda(b^l - 1)$, with
$\lambda=f(b-1)^{-1}$ where $f>0$ is the sole admissible digit.  So,
\begin{equation*}
  K(s) = \Bigl(\frac{b-1}{f}\Bigr)^s\sum_{l=1}^\infty\frac{1}{(b^l-1)^s}.
\end{equation*}
We will assume from here on in this section that $f=b-1$, as the general case reduces
to this one.  There is a well-known identity valid for $\Re s>0$ (but we could not
locate a standard reference):
\begin{equation}
  \label{eq:Lambert}
  \sum_{l=1}^\infty\frac{1}{(b^l-1)^s} = \sum_{j=0}^\infty \frac{(s)_j/j!}{b^{s+j}-1}.
\end{equation}
To prove this, one only needs by analyticity, to do it
for $s$ real and positive. It then follows from interverting summations in a
double sum with positive terms, obtained from using the binomial series.  The
right-hand side in \eqref{eq:Lambert} converges everywhere in $\CC\setminus\Delta$,
where $\Delta$ is the left semi-lattice $\{-m+k 2\pi i(\log b)^{-1}, m\in\NN,
k\in\ZZ\}$ (and at a candidate pole, removing the term where it intervenes,
we have an analytic function).

Consequently, all the $s_{m,k} = -m+k 2\pi i(\log b)^{-1}, m\in\NN, k\in\ZZ$,
are simple poles and the residues are:
\begin{equation}\label{eq:resN1}
  \lambda_{m,k} =
  \frac{(-m+k \frac{2\pi i}{\log b})(-m +1 +k \frac{2\pi i}{\log b})
              \dots(-1+k\frac{2\pi i}
  {\log b})}{m!\, \log b},
\end{equation}
For $m=0$ the numerator is to be interpreted as being $1$. For $k=0$ one has
$\lambda_{m,0} = (-1)^m(\log b)^{-1}$.

One may wonder if the series \eqref{eq:Lambert} is the one given by our Corollary
\ref{cor:main}, which states the validity everywhere of Equation
\eqref{eq:Kgeo}. The answer is no! To start with, Equation
\eqref{eq:Kgeo} has a parameter $\ell$, which can be any positive integer, and
it does a resummation of the original Dirichlet series from the left-hand side
of \eqref{eq:Lambert} only starting at $l=\ell$.

We need to explain what the coefficients $(u_m(s))_{m\geq0}$ are for this
case. Let us denote them $w_m(s)$.  From Equation \eqref{eq:recum}, they verify
the recurrence
\begin{equation*}
(b^{s+m} -1) w_m(s) = \sum_{j=1}^m \binom{m}{j}(b-1)^j w_{m-j}(s), 
\end{equation*}
with $w_0(s) = b^s(b^s-1)^{-1}$.  These coefficients are known. Actually we
only have to use formula \eqref{eq:defu} which gives the original
definition of the sequence $(u_m(s))_{m\geq0}$.  For $\Re s >0$:
\begin{equation*}
  w_m(s) = 0^m + \sum_{l=1}^\infty (1 - b^{-l})^m b^{-ls} 
= \sum_{l=0}^\infty(1-b^{-l})^m b^{-ls} 
\end{equation*}
Thus, we have the explicit finite Lambert-type sums here:
\begin{equation}
  \label{eq:wm}
  w_m(s) = \sum_{j=0}^m (-1)^j\binom{m}{j}\frac{b^{s+j}}{b^{s+j}-1} = 
0^m +  \sum_{j=0}^m (-1)^j\binom{m}{j}\frac{1}{b^{s+j}-1}.
\end{equation}

We can then enjoy exercises such as checking, for every $\ell\geq1$:
\begin{equation*}
  \sum_{1\leq l < \ell}\frac1{(b^l -1)^s} 
+  \sum_{m=0}^\infty (-1)^m \frac{(s)_m}{m!}\frac{w_m(s)}{(b^\ell-1)^{m+s}} 
=   \sum_{j=0}^\infty \frac{(s)_j/j!}{b^{s+j}-1},
\end{equation*}
which is what Equation \eqref{eq:Kgeo} and Corollary \ref{cor:main} say,
combined with \eqref{eq:Lambert}.

\section{Analytic continuation (second method)}


In this section we prove again that $K(s)=\sum'_{n>0}n^{-s}=\sum_{l=1}^\infty
K_l(s)$ admits a continuation to the whole complex plane as a meromorphic
function.  The method is the general one from
\cite{allouchemendespeyriere2000} (see also the earlier paper
\cite{allouchecohen1985}).  Our account will be very detailed, probably too much!

Two cases often require special provisions: $N=1$ and $N=b$.  They correspond
to very explicit Dirichlet series:
\begin{itemize}
\item If $N=b$, there are no restrictions and $K(s) = \zeta(s)$,
      the Riemann zeta function,
\item If $N=1$ then $K(s) =
  ((b-1)/f)^s\sum_{l=1}^\infty (b^l-1)^{-s}$ where $f>0$ is the sole admissible
  digit.  This was the topic of the previous section.
\end{itemize}
The first pole on the real axis is located for them
at an integer point $s_0$:  $s_0=1$ for $\zeta(s)$ and $s_0=0$ for $N=1$.
All other cases are with $0<s_0<1$.

As in \cite{allouchemendespeyriere2000} (where the analogous quantity is
denoted $F(s)$, whereas our $K(s)$ is analogous to their $G(s)$), it proves
convenient to also consider the variant
\begin{equation*}
  H(s)=  \sideset{}{'_{n>0}}\sum (n+1)^{-s}.
\end{equation*}
The abscissa of convergence is again $\log_b N$, the counting argument given
near the start of the first section  applying almost
identically. In the case with $A=\{b-1\}$, we simply have $H(s)=\frac{1}{b^s -
  1}$, which has poles only on the line $\Re s = 0$. But, as we have seen, $K(s)$ in
this case has poles on all lines $\Re s = -m$, $m\in \NN$.

We establish that $K(s) - H(s)$ is analytic for $\Re s>s_0 - 1$.
Consider the simple estimate (assuming $\sigma=\Re s\geq-1$):
\begin{equation*}
  \bigl|n^{-s}-(n+1)^{-s}\bigr| = \left|\int_{n}^{n+1}\frac{s\,\dt}{t^{s+1}} \right|
\leq |s|n^{-\sigma-1}.
\end{equation*}
This bound implies that $\sum'\bigl(n^{-s}-(n+1)^{-s}\bigr)$ converges to an analytic
function in the half-plane $\Re s > s_0 - 1$.  So $K(s)$ and $H(s)$ have
the same poles and residues in this half-plane.

We observe that admissible integers of length $l+1$ are of the shape $bn+a$,
with $n$ admissible of length $l$ and $a \in A$.  We now replace $(bn+a)^{-s}$
in $K_{l+1}(s)$ by its binomial series expansion in powers of $a/(bn)$. This
leads to a representation of $K_{l+1}(s)$ in terms of the $K_l(s+m)$'s, $m\geq0$.
This step is also the starting point in the Baillie algorithm
\cite{baillie1979}.
\begin{align}
\notag
(bn + a)^{-s} &=  \sum_{m=0}^\infty (-1)^m \frac{(s)_m}{m!}(bn)^{-s-m}a^m,\\
\label{eq:Klplus1}
  K_{l+1}(s) &= \sum_{m=0}^\infty (-1)^m \frac{(s)_m}{m!}b^{-s-m}\gamma_m K_l(s+m), \\
\notag
\text{\llap{with: }}  \gamma_m &= \sum_{a\in A} a^m \quad
  \text{and}\quad
(s)_m = \prod_{0\leq i <m}(s+i).
\end{align}
The Pochhammer symbol $(s)_m$ is defined to be $1$ for $m=0$ (this rule
applies to all empty products).  Except for $m=0$, $\gamma_m$ is also
$\sum_{a\in A_1} a^m$.

We now show that the series \eqref{eq:Klplus1} is absolutely convergent,
uniformly for $s=\sigma+it$ in any compact subset of $\{\Re s>\log_b
N\}$.  First we take note of the bound
\begin{equation*}
  \frac{|(s)_m|}{(\sigma)_m}\leq \frac{\Gamma(\sigma)}{|\Gamma(s)|}.
\end{equation*}
Indeed, $\Gamma(s)(s)_m=\Gamma(s+m)$ and we can use the integral
representation to get $|\Gamma(s+m)|\leq \Gamma(\sigma+m)$ if
$\sigma+m>0$. Here, $\sigma>0$ and $(\sigma)_m>0$.
Thus, the series in 
\eqref{eq:Klplus1} is bounded termwise by
\begin{equation*}
\frac{\Gamma(\sigma)}{|\Gamma(s)|}
\sum_m \frac{(\sigma)_m}{m!}b^{-\sigma-m}\gamma_m
  \mathop{\sum\nolimits'}\limits_{\ell(n)=l} n^{-\sigma-m} 
=
\frac{\Gamma(\sigma)}{|\Gamma(s)|}
\mathop{\sum\nolimits'}\limits_{\substack{\ell(n)=l\\ a \in A}}(bn - a)^{-\sigma}.
\end{equation*}
Call $\alpha_l$, say, the finite sum at the right-hand side. As $bn-a\geq
b^{l-1}$ if $n\geq b^{l-1}$, we have certainly $\alpha_l \leq
\mathop{\sum\nolimits'}\limits_{\ell(n)=l}Nb^{-(l-1)\sigma}=
N_1\cdot N^{l-1}\cdot N \cdot b^{-(l-1)\sigma}$. Hence the series $\sum_{l\geq
  l} \alpha_l$ converges, as soon as $\sigma>\log_b N$.

Hence, for $\Re s= \sigma>\log_b N$, we can sum the identity \eqref{eq:Klplus1}
from $l=1$ to infinity and intervert the summations over $l$ and over
$m$, obtaining:
\begin{equation*}
  K_{>\ell}(s) = N b^{-s}K_{\geq\ell}(s) + 
\sum_{m=1}^\infty (-1)^m\frac{(s)_m}{m!} b^{-s-m}\gamma_m K_{\geq \ell}(s+m).
\end{equation*}
Adding the block $K_{\ell}(s)$ to both sides, and moving $K_{\geq\ell}(s)$ to
the left-hand side, we obtain:
\begin{equation}
  \label{eq:Kellmain}
 \bigl(1 - N b^{-s}\bigr) K_{\geq\ell}(s) 
= K_\ell(s) + \sum_{m=1}^\infty (-1)^m\frac{(s)_m}{m!} b^{-s-m}\gamma_m K_{\geq \ell}(s+m).
\end{equation}
Equation \eqref{eq:Kellmain} is for now established only for $\Re
s=\sigma>\log_b N$.

From here on, let $s_0 = \log_b N$.

To prepare for the upcoming inductive reasoning, we observe that for any
compact set $Q$ in the complex plane, if we choose $m_0$ such that
$m_0+Q\subset \{z, \Re z > s_0\}$, the terms of the above series starting with
$m=m_0$ are defined (as evaluations are done in the half-plane $\Re s > s_0$)
and this sub-series converges absolutely and uniformly.  Indeed, we can bound
$K_{\geq \ell}(s+m)$ by $K(m_0 + \inf(\{\Re z, z\in Q\}))$, and $\gamma_m$ by $N
(b-1)^m$, and then apply the (uniform) ratio test.

In the next proposition, the cases $N=b$ (i.e.\@ when $K(s) = \zeta(s)$) and
$N=1$ both have their own specificities, but we do not handle them
here separately.
\begin{prop}\label{prop:cont}
  Let $b>1$ be an integer and $A$ be some subset of $\Iffint{0}{b-1}$, not
  empty and not reduced to $\{0\}$, of cardinality $N$.  The associated
  restricted Dirichlet series $K(s)$ admits a meromorphic continuation, still
  denoted $K(s)$, to the entire complex plane.  It verifies the identity
\begin{equation}\label{eq:Kmain}
 \bigl(1 - N b^{-s}\bigr) K(s) = \sum_{a\in A_1} a^{-s}  
 + \sum_{m=1}^\infty (-1)^m\frac{(s)_m}{m!} b^{-s-m}\gamma_m K(s+m),
\end{equation}
where $(s)_m$ is the Pochhammer symbol and the power sums $\gamma_m$ are
defined as:
\begin{equation}\label{eq:gammam}
  \gamma_m = \sum_{a\in A} a^m.
\end{equation}
The function
  \begin{equation}\label{eq:Lambda}
    \Lambda_K(s) = \prod_{m=0}^\infty (1 - N b^{-s-m})K(s),
  \end{equation}
  is entire.  The function $K(s)$ has a simple pole at $s_0=\log_b N$, with a
  positive residue $\lambda_0$.  It has only simple poles in the complex
  plane, which are among the $s_{m,k} = s_0 - m +
  k\frac{2\pi i}{\log b}$, $m\in \NN$, $k\in \ZZ$.
\end{prop}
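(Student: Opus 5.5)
The plan is to take equation \eqref{eq:Kellmain} with $\ell=1$, where $K_{\geq1}=K$ and $K_1(s)=\sum_{a\in A_1}a^{-s}$. This gives \eqref{eq:Kmain} on the half-plane $\Re s>s_0$. I would then use it as an infinite functional equation and extend $K$ strip by strip: by induction on $M\geq0$, show that $K$ continues meromorphically to $\Re s>s_0-M$, and that $\prod_{m=0}^{M-1}(1-Nb^{-s-m})K(s)$ is holomorphic there.

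For the inductive step, assume $K$ is meromorphic on $\Re s>s_0-M$ with poles only where $\prod_{m<M}(1-Nb^{-s-m})$ vanishes. Fix a compact set $Q$ in the strip $\Re s>s_0-M-1$ and choose $m_0$ as in the remark preceding the proposition, so that $m_0+Q$ lies in $\{\Re z>s_0\}$. There the tail $\sum_{m\geq m_0}$ converges absolutely and uniformly, since it only involves values of the original Dirichlet series, and so it is holomorphic on $Q$. Each of the finitely many terms with $1\leq m<m_0$ involves $K(s+m)$ with $\Re(s+m)>s_0-M$, which is meromorphic by the induction hypothesis. Its poles occur only where $1-Nb^{-s-m-j}=0$ for some $j<M$, i.e.\ among the zeros of $\prod_{1\leq i\leq M}(1-Nb^{-s-i})$. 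Dividing the right-hand side of \eqref{eq:Kmain} by $1-Nb^{-s}$ then defines a meromorphic function on the larger strip. It agrees with $K$ on the old region by the identity theorem, since \eqref{eq:Kmain} holds for $\Re s>s_0$ and both sides are meromorphic on the overlap. Multiplying by $\prod_{m=0}^{M}(1-Nb^{-s-m})$ kills all poles, which gives the holomorphy claim at level $M+1$. Letting $M\to\infty$, and using that $\prod_{m\geq0}(1-Nb^{-s-m})$ converges to an entire function, shows that $\Lambda_K$ is entire and that \eqref{eq:Kmain} holds everywhere. Alternatively, Corollary \ref{cor:main} already gives all of this at once; the point here is the second method.

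Simplicity of the poles needs more care than the crude count above, because a point $s_{m,k}$ might be hit by several factors. I would argue that the zeros of $1-Nb^{-s-j}$ for different $j$ are disjoint, since they lie on distinct vertical lines $\Re s=s_0-j$, and each zero is simple. Hence each zero of $\alpha(s)=\prod_m(1-Nb^{-s-m})$ is simple, and since $\Lambda_K=\alpha K$ is entire, every pole of $K$ is simple and lies in $\Delta$.

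Finally, the pole at $s_0$ and the positivity of its residue. For real $\sigma\downarrow s_0$, the series $K(\sigma)$ diverges to $+\infty$ because all its terms are positive and the series diverges at $s_0$. So $s_0$ is a genuine pole. To get the residue, I would evaluate \eqref{eq:Kmain} at $s$ near $s_0$. The right-hand side is holomorphic at $s_0$, because all the $K(s_0+m)$ with $m\geq1$ are convergent positive sums. Its value there is $R=\lim_{\sigma\downarrow s_0}(1-Nb^{-\sigma})K(\sigma)$. Since $1-Nb^{-\sigma}>0$ and $K(\sigma)>0$ for $\sigma>s_0$, we get $R\geq0$. Moreover $R\neq0$, since otherwise $K$ would be holomorphic at $s_0$, contradicting $K(\sigma)\to+\infty$. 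Then the residue is $\lambda_0=R/\bigl(\frac{\mathrm d}{\mathrm ds}(1-Nb^{-s})\bigr|_{s_0}=R/\log b>0$.

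The main obstacle I expect is the bookkeeping in the induction: making sure the finitely many shifted terms only introduce poles at lattice points $s_{m,k}$, and that the analytic-continuation identity is justified on connected domains avoiding these poles.
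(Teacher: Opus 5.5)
Your proposal is correct and follows essentially the same route as the paper: take \eqref{eq:Kellmain} with $\ell=1$, extend $K$ strip by strip through the infinite functional equation \eqref{eq:Kmain}, deduce that $\Lambda_K$ is entire from the convergence of $\prod_{m\geq0}(1-Nb^{-s-m})$, and get the pole at $s_0$ from $K(\sigma)\to+\infty$ as $\sigma\downarrow s_0$. You are also more explicit than the paper on the points it passes over quickly (uniform control of the tail via the $m_0$ remark, simplicity of the zeros because the factors vanish on distinct vertical lines, and $\lambda_0=R/\log b$ with $R>0$), and each of these is handled correctly.
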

\begin{rema}
  As we already mentioned this is only a very special instance of the general
  result from \cite{allouchemendespeyriere2000}.
\end{rema}
\begin{proof}
  Let us first suppose that $\Re s>s_0 - 1$.  So the series in
  \eqref{eq:Kmain} (which is the special case $\ell=1$ for Equation
  \eqref{eq:Kellmain}, whose validity is known for $\Re s > s_0$) converges
  and defines an analytic function in that half-plane.  Hence, $K(s)$ admits a
  meromorphic continution to it, with potential simple poles at the zeros
  $s_{0,k}$ of the equation $b^s = N$, which all have their real part equal to
  $s_0$.  As $\lim_{\sigma\to s_0^*}K(\sigma)=+\infty$, there really is a pole
  at $s_0$ and its residue $\lambda_{0,0}$ is positive.

  Suppose by induction on $m\geq1$ that we have proven analytic continuation
  as a meromorphic function with simple poles to $\Re s > s_0-m$. This is
  known for $m=1$.  The right-hand side of Equation \eqref{eq:Kmain} defines a
  meromorphic function in the half-plane $\{\Re s > s_0 - m -1\}$.  In the extended
  strip $-1+s_0-m<\Re s< s_0$, the division by $(1 - Nb^{-s})$ brings no
  extra singularity.  So this equation defines the analytic continuation, with
  simple poles located at the roots of the equations $b^s=N$, $b^{s+1}=N$,
  \dots, $b^{s+m}=N$.  By induction, this is true for every $m\geq1$.

  The infinite product $\prod_{m=0}^\infty (1 - Nb^{-s-m})$ is absolutely and
  uniformly convergent on any compact set, after having removed finitely many
  factors so that its terms have no zeros there, so it defines an entire
  function with simple zeros at the $s_{m,k}$, $m\in \NN$, $k\in \ZZ$.  Hence
  $\Lambda_K(s)$ from \eqref{eq:Lambda} is entire. And we have proven that
  \eqref{eq:Kmain} holds everywhere in the complex plane.
\end{proof}
\begin{rema}
  In the case of the Riemann zeta function, $N=b$, the statement places
  potential poles at $1 -\NN+\frac{2\pi i}{\log b} \ZZ$. Using $b=2$ and $b=3$
  and the linear independence over the rationals of $(\log 2, \log 3)$, we can
  reduce the potential pole locations to $1 - \NN$.  We shall provide another
  proof later, and only then address the urgent task of showing that $1$ is
  the only pole!
\end{rema}

Prior to engaging into a more detailed discussion of the poles and residues, we
state here the analytic continuation for $H(s) = \sum'_{n>0}(n+1)^{-s}$.
\begin{prop}\label{prop:KmainH}
  The modified restricted Dirichlet series $H(s)=\sum'_{n>0} (n+1)^{-s}$ admits a
  meromorphic continuation, still denoted $H(s)$, to the entire complex
  plane.  The function
  \begin{equation}\label{eq:LambdaH}
    \Lambda_H(s) = \prod_{m=0}^\infty (1 - N b^{-s-m})H(s).
  \end{equation}
   is entire.
  The relation
\begin{equation}\label{eq:KmainH}
 \bigl(1 - N b^{-s}\bigr) H(s) = \sum_{a\in A_1} (a+1)^{-s}  
 + \sum_{m=1}^\infty \frac{(s)_m}{m!} b^{-s-m}\gamma_m' H(s+m)
\end{equation}
holds at all $s$ such that $s\neq s_{m,k}=\log_b N - m + k\frac{2\pi i}{\log b}$,
$m\geq0$, $k\in \ZZ$.  Here the modified power sums $\gamma_m'$ are defined as
\begin{equation}\label{eq:gammamprime}
  \gamma_m' = \sum_{a\in A} (b-1-a)^m.
\end{equation}
\end{prop}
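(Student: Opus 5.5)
The approach is to copy the argument for Proposition \ref{prop:cont} exactly, with $H$ in place of $K$. The only new work is deriving the analogue of \eqref{eq:Klplus1} for the blocks $H_l(s)=\sum'_{\ell(n)=l}(n+1)^{-s}$.

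The key observation is that if $m$ is admissible of length $l+1$, then $m=bn+a$ with $n$ admissible of length $l$ and $a\in A$. Hence
\begin{equation*}
  m+1 = b(n+1) - (b-1-a).
\end{equation*}
We expand $(b(n+1)-a')^{-s}$ with $a'=b-1-a\in\Iffint0{b-1}$ by the binomial series in powers of $a'/(b(n+1))$. Since the sign of the increment is now negative, the factor $(-1)^m$ disappears. This gives
\begin{equation*}
  H_{l+1}(s)=\sum_{m=0}^\infty \frac{(s)_m}{m!}b^{-s-m}\gamma_m' H_l(s+m).
\end{equation*}
This also explains why $\gamma_m'$ appears, and why $\gamma_0'=N$.

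For absolute convergence when $\sigma=\Re s>s_0$, I use the bound $|(s)_m|\le(\sigma)_m\Gamma(\sigma)/|\Gamma(s)|$ as before. The termwise majorant then resums to $\sum'_{\ell(n)=l,\,a\in A}(b(n+1)-a'-\ldots)^{-\sigma}$. More precisely, the majorant is $\sum (bn+a+1)^{-\sigma}$ in its positive-series form, which is exactly $H_{l+1}(\sigma)$. It is bounded by $N_1N^{l}b^{-l\sigma}$, and this is summable in $l$ for $\sigma>s_0$.

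Summing over $l\ge1$ and interchanging sums gives
\begin{equation*}
  H_{\ge2}(s)=Nb^{-s}H(s)+\sum_{m\ge1}\frac{(s)_m}{m!}b^{-s-m}\gamma'_mH(s+m).
\end{equation*}
Adding $H_1(s)=\sum_{a\in A_1}(a+1)^{-s}$ to both sides yields \eqref{eq:KmainH} for $\Re s>s_0$.

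The continuation then goes by induction on half-planes $\Re s>s_0-m$, word for word as in the proof of Proposition \ref{prop:cont}. For a compact $Q$ I choose $m_0$ with $m_0+Q\subset\{\Re z>s_0\}$. The tail $\sum_{m\ge m_0}$ converges uniformly by the ratio test, using $\gamma'_m\le N(b-1)^m$ and $|H(s+m)|\le H(m_0+\inf\Re Q)$. The finitely many earlier terms are meromorphic by the induction hypothesis. Dividing by $1-Nb^{-s}$ introduces only simple poles at $s_{0,k}$. So $H$ extends meromorphically, with simple poles only at $s_{m,k}$, and \eqref{eq:KmainH} holds off these points by the identity principle. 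Multiplying by the entire product $\prod_m(1-Nb^{-s-m})$, whose zeros are simple at the $s_{m,k}$, shows that $\Lambda_H$ is entire.

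The main point of care is the first step: getting the sign and the digit reflection $a\mapsto b-1-a$ right. Equally, the expansion must be made around $b(n+1)$ rather than $bn$. Otherwise the $(n+1)$-shift would not reproduce $H_l(s+m)$. Once that identity is correct, everything else transfers verbatim. The bound $(b-1-a)/(b(n+1))\le (b-1)/b<1$ guarantees convergence of each binomial expansion.
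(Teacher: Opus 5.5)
Your proposal is correct and follows the paper's own route: the decomposition $n_1+1=b(n+1)-(b-1-a)$, the binomial series of $(1-t)^{-s}$, summation over blocks for $\Re s>s_0$, and then the inductive continuation exactly as in Proposition \ref{prop:cont}. You also supply the details the paper leaves to the reader, including the observation that the absolute-convergence majorant is exactly $\frac{\Gamma(\sigma)}{|\Gamma(s)|}H_{l+1}(\sigma)$.
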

\begin{proof}
  Let $l\geq1$.  Let $n_1$ be an admissible integer of length $l+1$, $n_1 = bn
  + a$, $a\in A$. We use the decompostion
\begin{equation*}
  n_1 + 1 = bn + a + 1 = b(n+1) - (b-1 - a),
\end{equation*}
and binomial series to compute $(n_1+1)^{-s}$.  Summing over the
admissible digits $a\in A$ will be expressed via the variant power sums
$\gamma_m'$.  The power series is the one of $(1 - t)^{-s}$ which absorbs the
$(-1)^m$ arising in $(1+t)^{-s}$.  The reasoning leading to \eqref{eq:KmainH}
is then almost identical to the one which gave \eqref{eq:Kmain}. Details left
to the reader.
\end{proof}


All contents of this section are but an illustration of long since known facts
\cite{allouchemendespeyriere2000}.

\section{Residues (I)}

Let us abridge the residue $\lambda_{0,0}$ at $s_0$ to $\lambda_0$.  As a
consequence of \eqref{eq:Kmain}, we obtain
\begin{equation}\label{eq:res}
  \lambda_0 \log b = 
 \sum_{a\in A_1} a^{-s_0} +   \sum_{m=1}^\infty (-1)^m\frac{(s_0)_m}{m!} N^{-1}b^{-m}\gamma_m K(s_0+m).
\end{equation}
The above equation does not make it particularly easy to check the positivity,
but it is possible.  If $s_0=0$ (i.e.\@ $N=1$), the series above vanishes
identically, the set $A_1$ is a singleton $\{a\}$ and $\lambda_0 \log b =
1$. We suppose $s_0>0$ from now on for this discussion.  Let us first observe
that (as is well-known) the individual series:
\begin{equation*}
  (bn+a)^{-s_0}-(bn)^{-s_0} = 
  \sum_{m=1}^\infty (-1)^m\frac{(s_0)_m}{m!}\frac{a^m}{(bn)^{s_0+m}}
\end{equation*}
have the property that the remainders have the same sign as their first term
(as $s_0>0$). This property is inherited by the
series from \eqref{eq:res}.  So, as $s_0>0$, there holds
\begin{equation*}
  \lambda_0 \log b > \sum_{a\in A_1} a^{-s_0} - \frac{s_0 \gamma_1}{Nb}K(s_0+1).
\end{equation*}
The same principle applied at $s=s_0+1$ in \eqref{eq:Kmain},
with $\ell=1$ gives
\begin{equation*}
  (1 - b^{-1})K(s_0+1) < \sum_{a\in A_1}a^{-s_0-1}.
\end{equation*}
Combining the two estimates we get
\begin{equation*}
  \lambda_0\log b > \sum_{a\in A_1} a^{-s_0} -  \frac{s_0 \gamma_1}{Nb}\frac1{1 - b^{-1}}
 \sum_{a\in A_1}a^{-s_0-1}.
\end{equation*}
We delay the discussion of the Riemann zeta function to the next section, and
assume $s_0<1$ for now. There holds $\gamma_1\leq N(b-1)$, so we obtain as
lower bound
\begin{equation*}
  \lambda_0\log b> \sum_{a\in A_1} (1-s_0 a^{-1})a^{-s_0}>0.
\end{equation*}
This confirms that the residue is positive.

A simpler method is to start from the formula for $\lambda_0$ which follows
from Equation \eqref{eq:KmainH} (recall the earlier observation that $K$ and $H$
have the same poles and residues for $\Re s = s_0$). We obtain:
\begin{equation*}
  \lambda_0 \log b =  \sum_{a\in A_1} (a+1)^{-s_0}
 + \sum_{m=1}^\infty \frac{(s_0)_m}{m!} N^{-1}b^{-m}\gamma_m' H(s_0+m),
\end{equation*}
which is visibly positive.
More generally there holds, for $\lambda_{0,k} = \Res_{s_{0,k}} K(s)$:
\begin{equation}\label{eq:resH}
  \lambda_{0,k} \log b =  \sum_{a\in A_1} (a+1)^{-s_{0,k}}  
 + \sum_{m=1}^\infty \frac{(s_{0,k})_m}{m!} N^{-1}b^{-m}\gamma_m' H(s_{0,k}+m).
\end{equation}

We reverse-engineer the binomial series in the
following manner, for any $s$ with $\Re s = \sigma> s_0-1$.  Let us first
consider this doubly infinite sum with non-negative terms:
\begin{align*}
  &\sum_{m=1}^\infty \sum_{l=1}^\infty \sum_{a\in A}
   \frac{|(s)_m|}{m!} b^{-\sigma-m}
    \mathop{\sum\nolimits'}\limits_{\ell(n)=l}\frac{(b-1-a)^m}{(n+1)^{\sigma+m}}
\\
  &\leq\frac{\Gamma(\sigma+1)}{|\Gamma(s+1)|}
   \sum_{m=1}^\infty \sum_{l=1}^\infty \sum_{a\in A}
   \frac{|s|}{|s+m|}\frac{(\sigma+1)_m}{m!} b^{-\sigma-m}
    \mathop{\sum\nolimits'}\limits_{\ell(n)=l}\frac{(b-1-a)^m}{(n+1)^{\sigma+m}}
\\
  &\leq\frac{\Gamma(\sigma+1)}{|\Gamma(s+1)|}
   \sum_{l=1}^\infty \mathop{\sum\nolimits'}\limits_{\ell(n)=l}\sum_{a\in A}
   (bn+b)^{-\sigma}
    \biggl(\Bigl(1 - \frac{b-1-a}{bn+b}\Bigr)^{-\sigma-1} - 1\biggr)
\\
  &\leq\frac{\Gamma(\sigma+1)}{|\Gamma(s+1)|}
   \sum_{l=1}^\infty \mathop{\sum\nolimits'}\limits_{\ell(n)=l}\sum_{a\in A}
   (bn+b)
    \bigl((bn+a+1)^{-\sigma-1} - (bn+b)^{-\sigma-1}\bigr).
\\
  &\leq Nb^{-\sigma}\frac{\Gamma(\sigma+1)}{|\Gamma(s+1)|}
   \sum_{l=1}^\infty \mathop{\sum\nolimits'}\limits_{\ell(n)=l}
   (n+1)
    \bigl(n^{-\sigma-1} - (n+1)^{-\sigma-1}\bigr)
\\
  &\leq Nb^{-\sigma}(\sigma+1)\frac{\Gamma(\sigma+1)}{|\Gamma(s+1)|}
   \sum_{l=1}^\infty \mathop{\sum\nolimits'}\limits_{\ell(n)=l}
   (n+1)n^{-\sigma-2} < \infty.
\end{align*}
Thus, it is finite.  Hence the next computations are justified as soon as $\Re
s> s_0-1$:
\begin{align*}
  &\sum_{m=1}^\infty \frac{(s)_m}{m!} b^{-s}b^{-m}\gamma_m' H(s+m)
\\
&=\sum_{m=1}^\infty \sum_{l=1}^\infty \sum_{a\in A}
   \frac{(s)_m}{m!} b^{-s-m}
    \mathop{\sum\nolimits'}\limits_{\ell(n)=l}\frac{(b-1-a)^m}{(n+1)^{s+m}}
\\
&=\sum_{l=1}^\infty \mathop{\sum\nolimits'}\limits_{\ell(n)=l}
  \sum_{a\in A}
  (bn+b)^{-s}\left(\bigl(1 - \frac{b-1-a}{bn+b}\bigr)^{-s} - 1\right)
\\
&=\lim_{\ell\to\infty}\sum_{l=1}^\ell \mathop{\sum\nolimits'}\limits_{\ell(n)=l}
  \sum_{a\in A}
  \left((bn+a+1)^{-s} - (bn+b)^{-s}\right)
\end{align*}
Something special happens if $b^s=N$, i.e.\@ if $s=s_{0,k}$, $k\in \ZZ$, as
then $\sum_{a\in A} b^{-s} = Nb^{-s} = 1$.  So the computation gives:
\begin{align*}
&\sum_{m=1}^\infty \frac{(s)_m}{m!} N^{-1}b^{-m}\gamma_m' H(s+m)
\\
&=\lim_{\ell\to\infty}\sum_{l=1}^\ell \mathop{\sum\nolimits'}\limits_{\ell(n)=l}
  \left(\Bigl(\sum_{a\in A} (bn+a+1)^{-s_{0,k}}\Bigr) - (n+1)^{-s_{0,k}}\right)
\\
&=-\sum_{a\in A_1} (a+1)^{-s_{0,k}} + \lim_{\ell\to\infty}\mathop{\sum\nolimits'}\limits_{\ell(n)=\ell+1} (n+1)^{-s_{0,k}}.
\end{align*}
Combining with Equation \eqref{eq:resH} leads to the next proposition:
\begin{prop}\label{prop:res}
  The residues $\lambda_{0,k}=\Res_{s_{0,k}}K(s)$ verify:
  \begin{equation}\label{eq:reslim}
    \lambda_{0,k} = (\log b)^{-1}\lim_{\ell\to\infty} 
    \mathop{\sum\nolimits'}\limits_{\ell(n)=\ell} \frac1{n^{s_{0,k}}}.
  \end{equation}
\end{prop}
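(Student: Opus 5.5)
Combining Equation \eqref{eq:resH} with the computation just carried out almost gives the statement. What remains is to justify each step and to replace $(n+1)^{-s_{0,k}}$ by $n^{-s_{0,k}}$ in the limit.

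The first step is the derivation of \eqref{eq:resH} itself. Multiply \eqref{eq:KmainH} by $(s-s_{0,k})$ and let $s\to s_{0,k}$. The factor $1-Nb^{-s}$ has a simple zero at $s_{0,k}$ with derivative $N b^{-s_{0,k}}\log b=\log b$. The right-hand side of \eqref{eq:KmainH} is analytic near $s_{0,k}$: the values $H(s+m)$ with $m\geq1$ are taken in $\Re>s_0$, and the series converges locally uniformly there, as shown for \eqref{eq:Kmain}. Hence $\log b\cdot\Res_{s_{0,k}}H$ equals the right-hand side evaluated at $s_{0,k}$. Since $K-H$ is analytic for $\Re s>s_0-1$, this residue is also $\lambda_{0,k}$. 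Using $b^{-s_{0,k}}=N^{-1}$ we obtain exactly \eqref{eq:resH}.

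The second step is to evaluate the series in \eqref{eq:resH}. The double-series bound displayed above, valid for $\Re s>s_0-1$ and hence at $s=s_{0,k}$, gives absolute convergence. This allows us to expand $\gamma_m'$ and $H(s+m)$ as sums over $a\in A$ and over admissible $n$ grouped by length. We may then interchange the order of summation and resum the binomial series of $(1-x)^{-s}$ with $x=(b-1-a)/(bn+b)<1$. Each inner term then becomes $(bn+a+1)^{-s}-(bn+b)^{-s}$. At $s=s_{0,k}$ we have $\sum_{a\in A}(bn+b)^{-s}=Nb^{-s}(n+1)^{-s}=(n+1)^{-s}$. The sum over $\ell(n)=l$ therefore equals $H_{l+1}-H_l$, where $H_l$ denotes the block of $H$ of length $l$, because the admissible integers of length $l+1$ are exactly the $bn+a$ with $\ell(n)=l$ and $a\in A$. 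Summing from $l=1$ to $\ell$ telescopes to $H_{\ell+1}(s_{0,k})-H_1(s_{0,k})$, and $H_1(s_{0,k})=\sum_{a\in A_1}(a+1)^{-s_{0,k}}$. This cancels the first term in \eqref{eq:resH}, so
\begin{equation*}
\lambda_{0,k}\log b=\lim_{\ell\to\infty}\mathop{\sum\nolimits'}\limits_{\ell(n)=\ell}(n+1)^{-s_{0,k}} .
\end{equation*}
In particular this limit exists.

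The last step, which is the only one not already written out above, is to show that $\sum'_{\ell(n)=\ell}\bigl((n+1)^{-s}-n^{-s}\bigr)\to0$ at $s=s_{0,k}$. The estimate $|n^{-s}-(n+1)^{-s}|\leq|s|n^{-s_0-1}$ applies. There are $N_1N^{\ell-1}$ admissible $n$ of length $\ell$, each with $n\geq b^{\ell-1}$. The difference is therefore bounded by $|s|N_1N^{\ell-1}b^{-(\ell-1)(s_0+1)}=|s|N_1b^{-(\ell-1)}\to0$. Hence the limit in \eqref{eq:reslim} exists and has the claimed value.

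The main point of care is the interchange of summations at a point on the critical line $\Re s=s_0$, where the original Dirichlet series diverges. Only the finite partial sums up to level $\ell$ can be used, which is why the result is stated as a limit rather than as a convergent series.
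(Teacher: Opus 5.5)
Your proposal is correct and follows the paper's route. You derive \eqref{eq:resH}, use the absolute convergence of the double series to rearrange and telescope it into $\lim_{\ell\to\infty}\sum'_{\ell(n)=\ell}(n+1)^{-s_{0,k}}$, and then replace $n+1$ by $n$ via $|n^{-s}-(n+1)^{-s}|\leq|s|n^{-s_0-1}$; your explicit bound $|s|N_1b^{-(\ell-1)}$ is a quantified form of the paper's appeal to $\sum' n^{-s_0-1}<\infty$.

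One small wording slip: in the first step you should divide \eqref{eq:KmainH} by $1-Nb^{-s}$ before multiplying by $s-s_{0,k}$, since multiplying alone only gives $0=0$ in the limit. The facts you list (simple zero with derivative $\log b$, analytic right-hand side) make the intended computation clear.
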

\begin{proof}
  The chain of equations prior to the proposition, and leading to a telescopic
  series, proves from \eqref{eq:resH} that the residue of $H(s)$ at
  $s=s_{0,k}$ is:
  \begin{equation*}
    (\log b)^{-1}\lim_{\ell\to\infty} 
    \mathop{\sum\nolimits'}\limits_{\ell(n)=\ell} \frac1{(n+1)^{s_{0,k}}}.
  \end{equation*}
  There only remains to explain why we can replace $n+1$ above with $n$. We
  use again the simple-minded upper bound $|n^{-s}-(n+1)^{-s}|\leq |s|
  n^{-\sigma-1}$, valid for $\sigma+1\geq0$, in particular it is valid with $s
  = s_{0,k}$ for which $\sigma=s_0$. And as $\sum' n^{-s_0-1}<\infty$, the
  contribution of the admissible integers of length $\ell$ goes to zero as
  $\ell$ goes to $\infty$.
\end{proof}

\section{The case of the zeta function}

Let us suppose that there are no restrictions on the digits, i.e.\@ $N=b$.
Then $s_0=1$ and $K(s) = \zeta(s)$. The formula of Proposition
\ref{prop:res} says that
\begin{equation*}
  \lambda_0 = (\log b)^{-1}\lim_{\ell\to\infty} \sum_{b^{\ell-1}\leq n<b^{\ell}}n^{-1},
\end{equation*}
which by a Riemann sum argument using $\int_{b^{-1}}^1 \frac{\dt}t$ gives
$\lambda_0=1$. More generally, at $s=1+k 2\pi i/\log(b)$, $k\neq 0$, we
need to evaluate
\begin{align*}
  \lim_{\ell\to\infty} \sum_{b^{\ell-1}\leq n<b^{\ell}}n^{-1-k2\pi i/\log(b)}
&= \lim_{\ell\to\infty} b^{-\ell(1 + k 2\pi i/\log(b))}
   \sum_{b^{\ell-1}\leq n<b^{\ell}}(b^{-\ell}n)^{-1-k2\pi i/\log(b)}\\
&= \int_{b^{-1}}^1 t^{-1-k 2\pi i/\log b}\dt
=
   \left[\frac{t^{-k2\pi i/\log(b)}}{-k2\pi i/\log(b)}\right]_{b^{-1}}^1 = 0.
\end{align*}
This (fortunately...) confirms that there are no poles apart from $s=1$ on the
line $\Re(s)=1$.  Thus, using Equation \eqref{eq:recres}, the other poles, if
any, are at non-positive integers.

We continue the discussion later, but indulge here in a short digression about
Equations \eqref{eq:res} and \eqref{eq:resH}. They give respectively:
\begin{align*}
  \log b &=  \sum_{1\leq a < b} \frac 1a +
\sum_{m=1}^\infty (-1)^mb^{-m-1}(\sum_{1\leq a<b} a^m) \zeta(m+1),
\\    
    \log b &=  \sum_{1\leq a <b} \frac1{a+1}  
 + \sum_{m=1}^\infty \frac{\sum_{1\leq a<b} a^m}{b^{m+1}}(\zeta(m+1)-1).
\end{align*}
The second one has geometric convergence of ratio $(1-b^{-1})/2$, we can improve
the quality of the first one this way:
\begin{align*}
  \log b &= \sum_{1\leq a < b} \frac 1a +
  \sum_{m=1}^\infty\frac{(-1)^ma^m}{b^{m+1}} + \sum_{m=1}^\infty
  (-1)^mb^{-m-1}(\sum_{1\leq a<b} a^m) (\zeta(m+1) -1)
  \\
  &= \sum_{1\leq a < b} \frac 1a - \sum_{0\leq a<b} \frac{a}{b(a+b)} +
  \sum_{m=1}^\infty (-1)^mb^{-m-1}(\sum_{1\leq a<b} a^m) (\zeta(m+1) -1)
  \\
  &= \sum_{1\leq a < 2b}\frac1a -1 + \sum_{m=1}^\infty
  (-1)^m\frac{\sum_{1\leq a<b} a^m}{b^{m+1}} (\zeta(m+1) -1).
\end{align*}
We can average the two representations of $\log b$ to have a sum involving
only odd zeta values. Here is with $b=2$:
\begin{align*}
  \log 2 &= \dfrac56 - \sum_{m=1}^\infty (-1)^{m-1}\dfrac{\zeta(m+1) -1}{2^{m+1}}
\\
 &=  \dfrac12 
 + \sum_{m=1}^\infty \dfrac{\zeta(m+1)-1}{2^{m+1}},
\\
&= \frac23 + \frac12\sum_{p=1}^\infty \frac{\zeta(2p+1)-1}{4^p}.
\end{align*}
The last formula (average of the first two) is geometric with ratio $1/16$. A
slower equivalent form is $\frac23+\frac12\sum_{n=2}^\infty
(n(4n^2-1))^{-1}$.

Let us examine again \eqref{eq:Kmain} for the Riemann zeta function (and using
$\ell=1$).  So $N=b$ and the formula reads:
\begin{equation}\label{eq:zeta}
  \bigl(1 - b^{1-s}\bigr) \zeta(s) =
  \sum_{1\leq a<b} a^{-s} 
  + \sum_{m=1}^\infty (-1)^m\frac{(s)_m}{m!} b^{-s-m}\gamma_m \zeta(s+m).
\end{equation}
Due to $(s)_m = s(s+1)\dots(s+m-1)$, the series extended from $m=2$ to
$+\infty$ (which is absolutely convergent for $\Re s>-1$) vanishes at $s=0$.
And the term with $m=1$ gives a contribution equal to $-b^{-1}\gamma_1=-(b-1)/2$
as we already know that the residue is $1$ at $s=1$. So, there is no
pole but a finite value at $s=0$:
\begin{equation*}
  (1-b)\zeta(0) = b-1 - \frac{b-1}2 = \frac{b-1}2\implies \zeta(0)=-\frac12.
\end{equation*}
Looking at Equation \eqref{eq:zeta} for $-\frac32<\Re s < -\frac12$, the terme
with $m=1$ (evaluation at $s+1$) gives no pole, and $m=2$ does not either as
the numerator vanishes at $s=-1$.  Proceeding inductively to $\frac12 -m <\Re
s < \frac12 - m+1$ for $m>2$ to check if there is a pole at $s=1-m$, we again
find that the $m$-th term of the series actually contributes no pole because
the numerator vanishes.  So the zeta function has only a single pole in the
complex plane, located at $s=1$.

\section{Residues (II)}

We return to the general case.
\begin{prop}\label{prop:recres}
  Let $\lambda_{m,k}$ be the residue (perhaps vanishing) of $K(s)$ at
  $s_{m,k}=s_0 - m + k 2\pi i /\log b$.  The following relations hold for
  $m\geq1$:
  \begin{equation}\label{eq:recres}
    (1 - b^{-m})\lambda_{m,k} =
    -N^{-1}\sum_{j=1}^m\frac{(m-s_{0,k})\dots
                            (m-j+1-s_{0,k})}{j!}
    \frac{\gamma_j}{b^j} \lambda_{m-j,k}.
  \end{equation}
  If $\lambda_{0,k}=0$, i.e.\@ if $s_{0,k}$ is not a pole, then $s_{m,k}$
  is not a pole either, for every $m\geq1$.
\end{prop}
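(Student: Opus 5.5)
The plan is to take residues in the global identity \eqref{eq:Kmain} of Proposition~\ref{prop:cont} at the point $s=s_{m,k}$, $m\geq1$. Two facts about this identity are needed. First, it holds as an identity of meromorphic functions on all of $\CC$. Second, on a small disc around $s_{m,k}$ the tail $\sum_{m'\geq m_0}$ of the series converges uniformly to a holomorphic function, once $m_0$ is large enough that $s+m'$ lies in $\{\Re z>s_0\}$; this was observed just before Proposition~\ref{prop:cont}. Hence residues can be taken term by term: the finitely many terms with $m'<m_0$ are meromorphic, and the tail contributes nothing. The term $\sum_{a\in A_1}a^{-s}$ is entire, so it contributes nothing either.

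The residue computation goes as follows.

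\textbf{Left-hand side.} At $s=s_{m,k}$ one has $b^{-s}=b^{m}b^{-s_{0,k}}=b^m/N$. So $1-Nb^{-s}$ takes the value $1-b^m\neq0$, and the residue of the left-hand side is $(1-b^{m})\lambda_{m,k}$.

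\textbf{The $j$-th term on the right.} The factor $(-1)^j\frac{(s)_j}{j!}b^{-s-j}\gamma_j$ is entire. The function $K(s+j)$ has at $s_{m,k}$ the residue $\lambda_{m-j,k}$ when $j\leq m$, because $s_{m,k}+j=s_{m-j,k}$. For $j>m$ the point $s_{m,k}+j$ has real part $>s_0$, where $K$ is holomorphic, so those terms contribute nothing. The $j$-th term therefore contributes
\begin{equation*}
(-1)^j\frac{(s_{m,k})_j}{j!}\,\frac{b^m}{N}\,b^{-j}\gamma_j\,\lambda_{m-j,k}.
\end{equation*}
Since all poles of $K$ are simple (Proposition~\ref{prop:cont}), no derivative terms arise.

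\textbf{Simplifying the Pochhammer factor.} We have
\begin{equation*}
(-1)^j(s_{m,k})_j=(-1)^j\prod_{i=0}^{j-1}(s_{0,k}-m+i)=\prod_{i=0}^{j-1}(m-i-s_{0,k}),
\end{equation*}
which is exactly $(m-s_{0,k})\cdots(m-j+1-s_{0,k})$.

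\textbf{Normalisation.} Dividing the resulting equation by $b^{m}$ turns $(1-b^m)$ into $-(1-b^{-m})$ and removes the factor $b^m$ on the right. Moving the sign across gives exactly \eqref{eq:recres}.

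The second assertion then follows by strong induction on $m$. Since $1-b^{-m}\neq0$ for $m\geq1$, relation \eqref{eq:recres} expresses $\lambda_{m,k}$ as a linear combination of $\lambda_{0,k},\dots,\lambda_{m-1,k}$. If $\lambda_{0,k}=0$, all these vanish inductively. Because the poles are at most simple, a zero residue means $s_{m,k}$ is not a pole.

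The only delicate point I expect is justifying the term-by-term residue extraction from the infinite series near a point with $\Re s$ far to the left. This is handled by the uniform convergence of the tail on compacts, which was already established. Everything else is bookkeeping of signs and powers of $b$.
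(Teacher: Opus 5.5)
Your proof is correct and follows the paper's own route: take residues at $s_{m,k}$ in the global identity \eqref{eq:Kmain}, then prove the vanishing statement by induction on $m$. You also correctly supply the details the paper leaves to the reader, namely the term-by-term residue extraction justified by uniform convergence of the tail, the evaluation $b^{-s_{m,k}}=b^m/N$, and the sign bookkeeping in the Pochhammer factor.
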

\begin{proof}
  The linear recurrence is a consequence of \eqref{eq:Kmain} (details left to
  reader; it is recommended to first replace the letter $m$ used as summation
  index in \eqref{eq:Kmain} by the letter $j$).  And induction using
  \eqref{eq:recres} then shows that if $\lambda_{0,k}=0$ then $\lambda_{m,k}$
  vanishes for every $m\geq1$.
\end{proof}
The equation \eqref{eq:recres} is also valid for $m=0$ as the empty sum on the
right-hand side has value zero.

We define new quantities $\mu_{m,k}$ such that the following relation holds:
\begin{equation}\label{eq:defmu}
  \lambda_{m,k} = \frac{-(-s_{0,k})_{m+1}}{m!} \mu_{m,k} =
(-1)^{m}\frac{s_{m,k}(s_{m,k}+1)\dots (s_{m,k}+m)}{m!} \mu_{m,k}. 
\end{equation}
But this definition fails for $k=0$ if $(-s_0)_{m+1}$ can vanish, i.e.\@ if
$s_0$ is $0$ (case $N=1$) or $1$ (case $N=b$).  We will define later
$\mu_{m,0}$ if $N=1$ and $N=b$ in an \emph{ad hoc} manner, in order for the next
Proposition to hold for them too.  For $1<N<b$ one has in particular:
\begin{equation*}
  \mu_{0,k} = \frac{\lambda_{0,k}}{s_{0,k}},\quad
  \mu_{1,k} = \frac{-\lambda_{1,k}}{s_{0,k}(s_{0,k}-1)}.
\end{equation*}
\begin{prop}\label{prop:recmu}
The following recurrence formula holds, for $m\geq1$:
\begin{equation}\label{eq:recmu}
  (1 - b^{-m})\mu_{m,k} = 
  -N^{-1}\sum_{j=1}^m\binom{m}{j}b^{-j}\gamma_j \mu_{m-j,k}\,.
\end{equation}
It can be written in the alternative form, with $\mu_{m,k}$ on both sides:
\begin{equation}\label{eq:recmubis}
  \mu_{m,k} = \frac{b^m}{N}\sum_{j=0}^m \binom{m}{j}\frac{\gamma_j}{b^j} \mu_{m-j,k}\,.
\end{equation}
\end{prop}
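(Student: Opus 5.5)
Substitute the definition \eqref{eq:defmu} into the recurrence \eqref{eq:recres} of Proposition~\ref{prop:recres}; this is a Pochhammer computation. Write $x=s_{0,k}$, so that $\lambda_{m,k}=-\frac{(-x)_{m+1}}{m!}\mu_{m,k}$ and $(-x)_{m+1}=(-x)(1-x)\cdots(m-x)$. The coefficient of $\lambda_{m-j,k}$ in \eqref{eq:recres} has numerator $(m-x)(m-1-x)\cdots(m-j+1-x)$. This equals the quotient $(-x)_{m+1}/(-x)_{m-j+1}$, since the latter product consists of the factors $(i-x)$ for $m-j<i\leq m$.

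The first step is to replace each $\lambda$ in \eqref{eq:recres} by its expression through $\mu$:
\begin{equation*}
-(1-b^{-m})\frac{(-x)_{m+1}}{m!}\mu_{m,k}
= N^{-1}\sum_{j=1}^m \frac{(-x)_{m+1}}{(-x)_{m-j+1}\,j!}\frac{\gamma_j}{b^j}\,\frac{(-x)_{m-j+1}}{(m-j)!}\mu_{m-j,k}.
\end{equation*}
The factors $(-x)_{m-j+1}$ cancel. Dividing by $(-x)_{m+1}/m!$ turns $\frac{m!}{j!(m-j)!}$ into $\binom mj$, and this gives \eqref{eq:recmu}. The division requires $(-x)_{m+1}\neq0$. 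That holds whenever $x\notin\{0,1,\dots,m\}$, which is automatic for $k\neq0$ since then $x$ is non-real, and also for $k=0$ when $1<N<b$, since $0<s_0<1$.

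The second step derives \eqref{eq:recmubis}. Move the term $-b^{-m}\mu_{m,k}$ to the right-hand side and add the $j=0$ term $N^{-1}\gamma_0\mu_{m,k}=\mu_{m,k}$ to both sides, using $\gamma_0=N$. Equation \eqref{eq:recmu} is equivalent to $\mu_{m,k}-b^{-m}\mu_{m,k}=\mu_{m,k}-N^{-1}\sum_{j=0}^m\binom mj b^{-j}\gamma_j\mu_{m-j,k}$, and hence to $b^{-m}\mu_{m,k}=N^{-1}\sum_{j=0}^m\binom mj b^{-j}\gamma_j\mu_{m-j,k}$. Multiplying by $b^m$ gives \eqref{eq:recmubis}.

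The main obstacle is the degenerate cases $N=1$ and $N=b$ with $k=0$, where $(-s_0)_{m+1}=0$ and $\mu_{m,0}$ is only defined later in an ad hoc manner. For these cases the proposition holds by construction of those definitions, which the paper announces it will choose so that the recurrence holds. In the proof I will therefore restrict to the nondegenerate situation and note this explicitly.
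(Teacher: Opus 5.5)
Your proposal is correct and takes the same route as the paper. The paper derives \eqref{eq:recmu} by substituting \eqref{eq:defmu} into \eqref{eq:recres}, leaving to the reader the Pochhammer cancellation that you carry out (checking $(-s_{0,k})_{m+1}\neq0$ in the nondegenerate cases). It then uses $\gamma_0=N$ to pass to \eqref{eq:recmubis}, and handles $N=1$ and $N=b$ with $k=0$ through the \emph{ad hoc} definition of $(\mu_{m,0})$, exactly as you do.
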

\begin{rema}
  For $N=1$, and $k=0$, equation \eqref{eq:defmu} can not be used
  because $s_0=0$.  For $k\neq 0$ one checks via a simple computation
  from Equation \eqref{eq:resN1}, which gives the value of $\lambda_{m,k}$,
   that 
  \begin{equation*}
    \mu_{m,k} =  \frac{(-1)^m}{2\pi i k }\qquad (N=1; k\neq0).
  \end{equation*}
  In order for Proposition \ref{prop:recmu} to hold also for $(\mu_{m,0})$,
  i.e.\@ for the recurrence \eqref{eq:recmu} to be verified, we can choose any
  constant $c$ and set $\mu_{m,0}=(-1)^mc$ because we know that this
  recurrence is verified by $((-1)^m(2\pi i k)^{-1})_{m\geq0}$, for example
  with $k=1$. We choose $\mu_{m,0} = (-1)^m$.

  For the Riemann zeta function, we define $(\mu_{m,0})$ to be the solution of
  \eqref{eq:recmu} such that $\mu_{0,0} = 1$.  It turns out that then
  $\mu_{m,0} = B_m$, the $m$-th Bernoulli number (with $B_1 = -\frac12$).  Indeed,
  the recurrence \eqref{eq:recmubis}, in this $N=b$ case,
  is solved by the Bernoulli numbers due to the identity
  $(\e^t-1)^{-1} \sum_{a=0}^{b-1}\exp(b^{-1}a t) = (\e^{b^{-1}t}-1)^{-1}$.
\end{rema}
\begin{proof}[Proof of Proposition \ref{prop:recmu}]
  The details of the deduction of Equation \eqref{eq:recmu} from Equations
  \eqref{eq:recres} and \eqref{eq:defmu}, if $1<N<b$, or if $N$ is arbitrary
  but $k$ is non-zero, are left to the reader.  In the case $N=1$ and $N=b$
  with $k=0$, the equation \eqref{eq:recmu} is true due to the \emph{ad hoc}
  definition of the sequence $(\mu_{m,0})$. The formula of \eqref{eq:recmu}
  holds also with $m=0$, as an empty finite sum is defined to be zero.  One
  uses $\gamma_0=N$ to relate the two ways of writing the recurrence.
\end{proof}
An immediate observation relative to \eqref{eq:recmu} is that when we vary $k$
we keep the exact same recurrence relation.  Hence:
\begin{prop}\label{prop:mukmu0}
  For every $m\in\NN$ and $k\in \ZZ$ there holds:
  \begin{equation}
    \mu_{m,k} = \frac{\mu_{0,k}}{\mu_{0,0}}\mu_{m,0}.
  \end{equation}
\end{prop}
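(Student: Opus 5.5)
The argument rests on the recurrence \eqref{eq:recmu} of Proposition \ref{prop:recmu}, whose coefficients do not depend on $k$. For $m\geq1$ the factor $1-b^{-m}$ is non-zero, so \eqref{eq:recmu} determines $\mu_{m,k}$ uniquely from $\mu_{0,k},\dots,\mu_{m-1,k}$. Hence a whole sequence $(x_m)_{m\geq0}$ solving \eqref{eq:recmu} is fixed by its initial term $x_0$. The recurrence is also linear, so the solution space is a one-dimensional vector space, spanned by the solution with $x_0=1$.

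First I check that $\mu_{0,0}\neq0$, so the quotient makes sense. If $1<N<b$, then $\mu_{0,0}=\lambda_0/s_0$. Here $\lambda_0>0$ by Proposition \ref{prop:cont} and $s_0>0$, so $\mu_{0,0}>0$. If $N=1$ or $N=b$, the ad hoc definitions in the Remark following Proposition \ref{prop:recmu} give $\mu_{0,0}=1$.

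Fix $k$ and set $x_m=\mu_{m,k}-\frac{\mu_{0,k}}{\mu_{0,0}}\mu_{m,0}$. By Proposition \ref{prop:recmu}, both $(\mu_{m,k})_m$ and $(\mu_{m,0})_m$ satisfy \eqref{eq:recmu}. This holds for every $k$, including $k=0$ in the cases $N=1$ and $N=b$, by the ad hoc choices. So $(x_m)$ satisfies \eqref{eq:recmu} as well. Since $x_0=0$, induction on $m$ gives $x_m=0$ for all $m$: at step $m$ the right-hand side of \eqref{eq:recmu} involves only $x_0,\dots,x_{m-1}$, all zero, and we divide by $1-b^{-m}\neq0$.

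The only point needing care is the validity of \eqref{eq:recmu} uniformly in $k$ and in the degenerate cases, together with $\mu_{0,0}\neq0$. Both are supplied by Proposition \ref{prop:recmu} and the accompanying remark, so I expect no real obstacle. The proof should only note that the case $\lambda_{0,k}=0$ is covered automatically: then $\mu_{0,k}=0$ and the identity says $\mu_{m,k}=0$, consistent with Proposition \ref{prop:recres}.
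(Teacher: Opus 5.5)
Your proof is correct and follows the paper's own argument. The paper simply observes that \eqref{eq:recmu} has the same $k$-independent coefficients and, since $1-b^{-m}\neq0$ for $m\geq1$, fixes each solution sequence by its initial term; you additionally make explicit the check $\mu_{0,0}\neq0$ (from $\lambda_0>0$ and $s_0>0$, or the normalization $\mu_{0,0}=1$ when $N=1$ or $N=b$), which the paper leaves implicit.
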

In other terms, once we know the residues on the horizontal axis, we don't
need to solve again \eqref{eq:recres} for $k\neq0$, if we know
$\lambda_{0,k}$.

Let us now consider the (formal, at this stage) normalized exponential
generating function for the modified residues on the horizontal axis:
\begin{equation*}
  B(t) = (\mu_{0,0})^{-1}\sum_{m=0}^\infty\frac{\mu_{m,0}}{m!}t^m.
\end{equation*}
We compute the product:
\begin{align*}
  (\sum_{a\in A} \e^{b^{-1}at}) B(t) &= \sum_{j=0}^\infty \frac{b^{-j}\gamma_j}{j!}t^j
                                    \sum_{q=0}^\infty\frac{\mu_{q,0}}{q!}t^q
\\
&
=\sum_{m=0}^\infty (\sum_{j=0}^m \binom{m}{j}b^{-j}\gamma_j\mu_{m-j,0})\frac{t^m}{m!}
\\
&=
\sum_{m=0}^\infty Nb^{-m}\mu_{m,0}\frac{t^m}{m!} 
= N B(b^{-1}t).
\end{align*}
This gives the functional equation:
\begin{equation}
  \label{eq:genfunc}
  B(t) = \frac{N}{\sum_{a\in A} \e^{b^{-1}at}} B(b^{-1}t).
\end{equation}
We can now state the following somewhat striking corollary:
\begin{theo}\label{thm:E}
  Let $E(t)$ be the moment generating function for the canonical probability
  measure with support the Cantor set of those real numbers in $\Iff01$ whose
  infinite $b$-radix expansion (or at least one of them, if there are two)
  uses admissible digits only.  The exponential generating function
  $B(t)=(\mu_{0,0})^{-1}\sum_{m=0}^\infty\frac{\mu_{m,0}}{m!}t^m$ of the
  normalized residues on the real axis of the restricted Dirichlet series
  $K(s)$ is its multiplicative inverse:
  \begin{equation}
    B(t) = \lim_{k\to\infty}\frac{N^k}
                 {\sum\limits_{a_1, \dots, a_k \in A}\e^{t\sum_{j=1}^k b^{-j}a_j}}=
\Bigl(\prod_{j=1}^\infty \frac{\sum_{a\in A} \e^{b^{-j}at}}{N}\Bigr)^{-1}\;.
  \end{equation}
  It is a meromorphic function in the entire complex plane, as inverse of
  an entire function. It verifies $\Re t<0 \implies |B(t)|>1$.
\end{theo}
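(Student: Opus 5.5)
We will iterate the functional equation \eqref{eq:genfunc}, first as an identity of formal power series and then analytically. Applying \eqref{eq:genfunc} $k$ times gives
\begin{equation*}
  B(t) = \prod_{j=1}^k \frac{N}{\sum_{a\in A}\e^{b^{-j}at}}\; B(b^{-k}t)
  = \frac{N^k}{\sum_{a_1,\dots,a_k\in A}\e^{t\sum_{j=1}^k b^{-j}a_j}}\,B(b^{-k}t),
\end{equation*}
where the second form comes from expanding the product of the $k$ digit sums. So the whole statement reduces to two facts. First, $B(b^{-k}t)\to B(0)=1$; the normalization by $\mu_{0,0}$ is exactly what makes $B(0)=1$. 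Second, the partial products converge to $E(t)^{-1}$.

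To avoid any convergence issue with the formal series $B$, we will instead work with the entire function $E$. Each factor $N^{-1}\sum_{a\in A}\e^{b^{-j}at}$ equals $1+O(b^{-j}|t|)$ uniformly on compacts, so the infinite product converges locally uniformly. Hence $E$ is entire with $E(0)=1$, and it is the moment generating function of the Cantor measure: that measure is the law of $\sum_j b^{-j}X_j$ with $X_j$ i.i.d.\ uniform on $A$. The product $\prod_{j\ge1}$ also satisfies
\begin{equation*}
  E(t)=\frac{\sum_{a\in A}\e^{b^{-1}at}}{N}\,E(b^{-1}t).
\end{equation*}
Consequently $F=1/E$, meromorphic on $\CC$ and holomorphic near $0$ with $F(0)=1$, satisfies the same functional equation \eqref{eq:genfunc} as $B$.

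Next comes a uniqueness step for power series. Suppose $G(t)=\sum g_m t^m/m!$ satisfies \eqref{eq:genfunc} formally. Comparing coefficients of $t^m$ recovers exactly the recurrence \eqref{eq:recmubis} for $g_m$, that is, \eqref{eq:recmu}:
\begin{equation*}
  (1-b^{-m})g_m=-N^{-1}\sum_{j=1}^m\binom{m}{j}b^{-j}\gamma_j g_{m-j}.
\end{equation*}
Since $1-b^{-m}\neq0$ for $m\ge1$, this determines all $g_m$ from $g_0$. The Taylor coefficients of $F$ at $0$ and the normalized $\mu_{m,0}/\mu_{0,0}$ satisfy the same recurrence with the same initial value $1$. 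Therefore $B$ is the Taylor series of $1/E$ at $0$. This identifies $B$ with the meromorphic function $1/E$, and the limit formula follows from the locally uniform convergence of the partial products away from the zeros of $E$. The point to check with care is that the recurrence of Proposition~\ref{prop:recmu} really characterizes the sequence; it does, since the leading coefficient never vanishes.

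Finally, for $\Re t<0$ we bound $E$ directly. Since $\mu$ is a probability measure, $|E(t)|\le\int\e^{x\Re t}\dmu(x)$. The measure $\mu$ is not concentrated at $x=0$, because $A$ contains a nonzero digit. So $\e^{x\Re t}<1$ on a set of positive $\mu$-measure, giving $|E(t)|<1$ and hence $|B(t)|>1$. The main obstacle is conceptual rather than technical: one must make sure the normalization $\mu_{0,0}\neq0$ holds. This is true because $\mu_{0,0}=\lambda_0/s_0$ with $\lambda_0>0$ from Proposition~\ref{prop:cont}, and in the cases $N=1$ and $N=b$ it holds by the ad hoc choice $\mu_{0,0}=1$.
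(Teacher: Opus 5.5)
Your proof is correct and follows the paper's argument: show that $E$ is entire, observe that $1/E$ satisfies the functional equation \eqref{eq:genfunc}, and identify its Taylor coefficients with $\mu_{m,0}/\mu_{0,0}$, since the recurrence \eqref{eq:recmu} determines the sequence from its initial value $1$. The differences are minor: you bound $|E(t)|$ for $\Re t<0$ through $\int \e^{x\Re t}\,\dmu(x)$ rather than factor by factor as the paper does, and you state explicitly that $\mu_{0,0}\neq0$ and $1-b^{-m}\neq0$, which the paper leaves implicit.
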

\begin{rema}
  The reference to poles on the real axis is a bit vague, as it allows for a
  vanishing residue. The precise formulation is given by the formula defining
  $B(t)$ and the prior definitions.  When we refer to $b$-radix expansion, we
  mean an infinite one: so if $0$ is not an admissible digit, and $b-1$ isn't
  either, ``$b$-decimal'' numbers are not in the alluded-to Cantor set even if
  their finite expansion uses only digits from $A$.  And of course, in the
  case of the Riemann zeta function, the ``Cantor'' set is the full unit
  interval $\Iff01$.
\end{rema}
\begin{rema}
  In the case $b=N$, which corresponds to the Riemann zeta function
  $\zeta(s)$, the function $E(t)$ is $\int_0^1\e^{tx}\dx = \frac{\e^t-1}{t}$.
  Hence $B(t) = \frac{t}{e^t-1}$ is indeed the exponential generating function
  of the $\mu_{m,0}/\mu_{0,0}$ quantities in that case, as we have argued
  earlier that $\mu_{m,0}=B_m$ due to their \emph{ad hoc} definition as the solution
  of Equation \eqref{eq:recmubis} with $\mu_{0,0}=1$.  This makes, in
  general, the ratios $\mu_{m,0}/\mu_{0,0}$ (which are rational numbers)
  some sort of generalized Bernoulli numbers associated with $b>1$ and
  $A\subset \Iffint{0}{b-1}$, $A\neq\emptyset$, $A\neq\{0\}$.
\end{rema}
\begin{proof}[Proof of Theorem \ref{thm:E}]
  We first show that the infinite product
\[
  E(t)=\prod_{j=1}^\infty \frac{\sum_{a\in A} \e^{b^{-j}at}}N
\]
converges everywhere in the complex plane and defines an entire function.  For
$N=1$ and $f>0$ the allowed digit, this clearly is the case with
$E(t)=\exp(\frac{f}{b-1}t)$.  If $N>1$, it is also clearly the case from
the fact that
\begin{equation*}
  \frac{\sum_{a\in A} \e^{b^{-j}at}}N = 1 + O_{t\to0}(t).
\end{equation*}
And, for any bounded region, only finitely many terms may have zeros in it,
and removing those terms we get a uniformly convergent non-zero
product. Zeros of the entire function $E(t)$ are of the shape $b^jz$,
$j\geq1$, where $z$ is in finitely many additive cosets of $2\pi i \ZZ$,
corresponding to $\e^z$ being a non-zero root of the polynomial
$P_A(z)=\sum_{a\in A} z^a$.  The function $E$ is of finite
exponential type $f/(b-1)$ but we don't need this. For $\Re t < 0$, each term
of the product is of complex modulus less than $1$, so $|E(t)|<1$.

Let $r>0$ be the smallest modulus of a root of $E$ (or $r=\infty$ for
$N=1$), and consider the power series expansion of $B(t) = E(t)^{-1}$:
\begin{equation*}
  B(t) = 1
  + \sum_{j=1}^\infty \frac{c_j}{j!} t^j,
\end{equation*}
which exists and is convergent for $|t|<r$. We set $c_0=1$. By
construction of $E(t)$ there holds
\begin{equation*}
  B(t) = \frac{N}{\sum_{a\in A} \e^{b^{-1}at}} B(b^{-1}t).
\end{equation*}
Reversing the steps of an earlier calculation this means that the sequence
$(c_m)_{m\geq0}$ shares the same equivalent recurrence formulas \eqref{eq:recmu}
and \eqref{eq:recmubis}
as does the sequence $(\mu_{m,0}/\mu_{0,0})_{m\geq0}$. As they have the same
starting point at $m=0$, this establishes that $c_m = \mu_{m,0}/\mu_{0,0}$.

To complete the proof we explain briefly the connection with Cantor sets.
Consider infinitely many identically distributed and independent random
variables $X_j$, $j\geq1$, each probability $1/N$ to
take any given value $a\in A$.  It is well-known that, if $\Omega$ is the
probability space, and $x:\Omega\to\Iff01$ is the map
\begin{equation*}
  x(\omega) = \sum_{j=1}^\infty \frac{X_j(\omega)}{b^j},
\end{equation*}
(which is maybe not one-to-one if $A$ contains both $0$ and $b-1$, but this is
benign) then the push-forward under $x$ of the probability measure to $\Iff01$
gives a Borel measure, which is Lebesgue measure if $N=1$, else is singular to
Lebesgue measure and supported by the Cantor set whose first construction step
is to replace $\Iff01$ by the union of the intervals
$\Iff{b^{-1}a}{b^{-1}(a+1)}$, $a\in A$, then repeat the operation in each of
them, etc\dots.  For any complex $t$, the expectation value of the random
variable $\e^{tx}$ on $\Omega$ becomes the integral of the function $x\mapsto
\e^{tx}$ against that push-forward probability measure.  If we compute it on
$\Omega$ as the limit from approximation by simple functions constant on
cylinder sets defined by finitely many conditions $X_1=a_1$, \dots, $X_k =
a_k$, we obtain that this expansion is the limit as $k$ goes to infinity of
\begin{equation*}
  N^{-k}\sum\limits_{a_1, \dots, a_k \in A}\e^{t\sum_{j=1}^k b^{-j}a_k}= 
   \prod_{j=1}^k\frac{\sum_{a\in A} \e^{b^{-j}at}}{N}.
\end{equation*}
In other terms it is exactly $E(t)$.
\end{proof}

\section{Values at non-positive integers}

In this final section we study the values of $K(s)$ at the non-positive integers.
We shall assume $N>1$, as the non-positive integers are poles if $N=1$.  We
also assume $N<b$, as the pole at $s=1$ of the Riemann zeta function modifies
the details of the discussion.

Consider Equation \eqref{eq:Kmain} and substitute $s=0$.  All contributions of
the series on the right-hand side of this equation vanish.
It follows:
\begin{equation*}
  K(0) = \frac{-N_1}{N-1}.
\end{equation*}
We will discover that it is important whether $N_1=N$ or $N_1=N-1$.
\begin{prop}\label{prop:zerovalues}
  The sequence $(K(-m))_{m\geq0}$, assuming $1<N<b$, consists of rational
  numbers verifying $K(0)=-N_1/(N-1)$ and the following recurrence:
  \begin{equation}
    \label{eq:kmrec}
    m\geq1
\implies
    (b^{-m} - N)K(-m) = \frac{\gamma_m}{b^m} + 
      \sum_{j=1}^m \binom{m}{j}\frac{\gamma_j}{b^j}K(-m+j).
  \end{equation}
  If $0$ is an admissible digit, i.e.\@ if $K(0)=-1$, then $K(-m)=0$ for every
  $m\geq1$.  If $0$ is not an admissible digit, i.e.\@ if $N_1 = N$, equivalently if
  $K(0)<-1$, the exponential generating function $C(t) = \sum_{m=0}^\infty
  K(-m)\frac{t^m}{m!}$ verifies the following functional equation:
  \begin{equation}\label{eq:kmgen}
    C(b^{-1}t) = \alpha_A(b^{-1}t) + \alpha_A(b^{-1}t) C(t),
  \end{equation}
  where $\alpha_A(t) = \sum_{a\in A}\e^{at}$. It has a positive radius of convergence
  and is a meromorphic function in the entire complex plane. It is given ``explicitly'' as
  \begin{equation}
    \label{eq:C}
    C(t)=\sum_{m=0}^\infty
    K(-m)\frac{t^m}{m!} = -1 - \sum_{k=1}^\infty\frac1{\prod_{j=1}^k \alpha_A(b^{-j}t)}\,.
  \end{equation}
  At every location except at the elements of $\{b^j z, j\geq1, \alpha_A(z)=0\}$, the
  series at the right-hand side of \eqref{eq:C} converges geometrically.
\end{prop}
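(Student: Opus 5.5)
The plan is to specialize the global functional equation \eqref{eq:Kmain} of Proposition \ref{prop:cont} at $s=-m$. First I check that this is legitimate. Since $1<N<b$ we have $0<s_0<1$, so no point $s_{m,k}$ lies on the real axis at an integer. Hence $K$ is analytic at every non-positive integer, and \eqref{eq:Kmain} holds there. At $s=-m$, the Pochhammer symbol $(-m)_j$ vanishes for $j>m$, so the series truncates to $1\leq j\leq m$. Moreover $(-1)^j(-m)_j/j!=\binom{m}{j}$, and for $m\geq1$ we have $\sum_{a\in A_1}a^m=\gamma_m$. This gives
\[
(1-Nb^{m})K(-m)=\gamma_m+\sum_{j=1}^m\binom mj b^{m-j}\gamma_jK(-m+j),
\]
and multiplying by $b^{-m}$ yields \eqref{eq:kmrec}. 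Since $b^{-m}-N\neq0$, rationality follows by induction from $K(0)=-N_1/(N-1)$.

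If $0\in A$ then $N_1=N-1$ and $K(0)=-1$. I will show $K(-m)=0$ for $m\geq1$ by strong induction. On the right of \eqref{eq:kmrec}, only the term $j=m$ survives among the sum, by the induction hypothesis. The right side is then $\gamma_mb^{-m}+\gamma_mb^{-m}K(0)=0$, so $K(-m)=0$. Conversely, $K(0)<-1$ exactly when $N_1=N$.

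Now assume $N_1=N$. I multiply \eqref{eq:kmrec} by $t^m/m!$ and sum over $m\geq0$. The case $m=0$ is $(1-N)K(0)=N=\gamma_0$, which fits the same pattern with the $j$-sum empty. The left side gives $C(t/b)-NC(t)$. On the right, $\sum_m\gamma_mb^{-m}t^m/m!=\alpha_A(t/b)$, and the binomial convolution over $j\geq1$ gives $(\alpha_A(t/b)-N)C(t)$. Cancelling $-NC(t)$ yields \eqref{eq:kmgen} as an identity of formal power series. Conversely, comparing coefficients of $t^m$ shows that \eqref{eq:kmgen} determines every coefficient from the constant term, because $b^{-m}-N\neq0$. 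So any power series satisfying \eqref{eq:kmgen} with constant term $-N/(N-1)$ equals $C$.

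The main obstacle is the analytic part: positive radius of convergence and meromorphy. I would avoid estimating $K(-m)$ directly. Instead I define $G(t)$ as the right-hand side of \eqref{eq:C} and show it has the required properties.
\begin{itemize}
\item \emph{Convergence.} Write $\prod_{j=1}^k\alpha_A(b^{-j}t)=N^k\prod_{j=1}^k\alpha_A(b^{-j}t)/N$. As in the proof of Theorem \ref{thm:E}, the normalized partial products converge locally uniformly to the entire function $E(t)$. Away from the zeros of the factors they are bounded below in modulus on compact sets. Hence the $k$-th term of \eqref{eq:C} is $O(N^{-k})$ there, which gives the stated geometric convergence and makes $G$ meromorphic. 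Near $t=0$ every factor is close to $N$, so $G$ is holomorphic in a disc.
\item \emph{Functional equation.} Shifting the index gives $G(t)=-1+\bigl(1+G(bt)\bigr)^{\ldots}$; concretely, one checks $G(bt)=-1-\alpha_A(t)^{-1}\bigl(1+\sum_{k\geq1}\prod_{j=1}^{k}\alpha_A(b^{-j}t)^{-1}\bigr)=-1+\alpha_A(t)^{-1}G(t)$. Replacing $t$ by $b^{-1}t$, this is exactly \eqref{eq:kmgen}.
\item \emph{Constant term.} $G(0)=-1-\sum_{k\geq1}N^{-k}=-N/(N-1)=K(0)$.
\end{itemize}
By the uniqueness remark above, the Taylor series of $G$ at $0$ coincides with $C$. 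Therefore $C$ has positive radius of convergence and extends to the meromorphic function $G$, which is \eqref{eq:C}.
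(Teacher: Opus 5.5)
Your proposal is correct and follows essentially the paper's route: you specialize \eqref{eq:Kmain} at $s=-m$ and induct when $0\in A$. For $N_1=N$ you turn the recurrence into \eqref{eq:kmgen}, show that the right-hand side of \eqref{eq:C} converges geometrically off $\{b^jz\}$, satisfies \eqref{eq:kmgen} near $0$ and equals $K(0)$ at $0$, and then use uniqueness of the recurrence's solution to identify its Taylor coefficients with the $K(-m)$. Two points to fix: locally uniform convergence away from the points $b^jz$ only gives holomorphy there, so, as the paper does, factor out the finitely many $\alpha_A(b^{-j}t)$ vanishing in a given disk to see that the singularities are poles; and delete the garbled fragment ``$G(t)=-1+(1+G(bt))^{\ldots}$'', since your explicit index-shift computation that follows it is correct.
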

\begin{proof}
  Equation \eqref{eq:kmrec} is a corollary to \eqref{eq:Kmain} (details left
  to reader; it is recommended though to replace letter $m$ in
  \eqref{eq:Kmain} by letter $j$ at the very start).  As $K(0)$ is rational,
  all the $K(-m)$'s are, too. The equation determining $K(-1)$ is
  \begin{equation*}
    (b^{-1} -N)K(-1)=b^{-1}\gamma_1 + b^{-1}\gamma_1 K(0).
  \end{equation*}
So, if $K(0)=-1$, we get $K(-1)=0$.  But then 
  \begin{equation*}
    (b^{-2} -N)K(-2)=b^{-2}\gamma_2+ b^{-2}\gamma_2 K(0) = 0.
  \end{equation*}
And by induction $K(-m)=0$ for every $m\geq1$.

We now suppose that $N_1\neq N-1$, i.e.\@ $0\notin A$ and $N_1=N$. 
We rewrite \eqref{eq:kmrec} in the equivalent form (as $\gamma_0=N$):
\begin{equation*}
   b^{-m}K(-m) = \frac{\gamma_m}{b^m} + 
      \sum_{j=0}^m \binom{m}{j}\frac{\gamma_j}{b^j}K(-m+j).
  \end{equation*}
  Observe that this equation holds true also for $m=0$. This is due to
  $\gamma_0=N$ and $N=N_1$. So, the formal power series $C(t)$ verifies:
  \begin{equation*}
    C(b^{-1}t) = \sum_{m=0}^\infty \frac{\gamma_m}{b^m}\frac{t^m}{m!} 
 + C(t)\sum_{m=0}^\infty \frac{\gamma_m}{b^m}\frac{t^m}{m!}.
  \end{equation*}
  As $\sum_{m=0}^\infty \gamma_m\frac{t^m}{m!} = \alpha_A(t)$ we recognize the
  functional equation \eqref{eq:kmgen}. Conversely, this identity of power
  series leads back to the recurrence (this again crucially uses $N=N_1$; we
  can write a functional equation in the general case, but as its solution if
  $N_1<N$ is already known we skip that).

  Let us now directly examine the expression given in the right-hand side of
  \eqref{eq:C}.  The denominators vanish at the points of the set $\{b^jz,
  j\geq1, \alpha_A(z)=0\}$ whose intersections with bounded regions are
  finite.  Take any large closed disk and consider the right-hand side of
  \eqref{eq:C} but removing from it all occurrences of those finitely many
  functions $\alpha_A(b^{-1}t)$, \dots, $\alpha_A(b^{-k}t), \dots$ having
  zeros in this disk.  The remaining expression is then everywhere defined,
  and as $\alpha_A(b^{-j}t)$ converges uniformly on such bounded region toward
  $N>1$ as $j\to \infty$, the series converges geometrically, and defines an
  analytic function.  Restoring the removed factors we obtain that the
  expression converges everywhere away from the poles and defines a
  meromorphic function $D(t)$.  Let $r>0$ be the smallest modulus of a root of
  $\alpha_A(b^{-1}t)$, we can expand $D(t)$ for $|t|<r$ into a convergent
  power series $\sum_{m=0}^\infty d_m t^m/m!$.  The function $D(t)$ verifies
  the functional equation \eqref{eq:kmgen}.  Note in passing that this is not
  an homogeneous equation and the value $D(0)$ is constrained by it, but we
  can also compute it from the series as $-\sum_{k=0}^\infty N^{-k} = -(1 -
  1/N)^{-1}=-N/(N-1)=K(0)$.  The coefficients $(d_m)_{m\geq0}$ verify the same
  recurrence and are thus identical with the $(K(-m))_{m\geq0}$.  This
  completes the proof.
\end{proof}



\providecommand\bibcommenthead{}
\def\blocation#1{\unskip} \footnotesize



\end{document}